\def\Bbb{\mathbb}
\def\Cal{\mathcal}
\def\Dt{\partial_t}
\def\eb{\varepsilon}
\def\R {\mathbb{R}}
\def\<{\left<}
\def\>{\right>}
\def\Nx{\nabla_x}
\def\Dx{\Delta_x}
\def\({\left(}
\def\){\right)}
\def\divv{\operatorname{div}}
\def\rot{\operatorname{curl}}
\def\R{\Bbb R}
\def\Dx{\Delta_x}
\def\Nx{\nabla_x}
\def\Dt{\partial_t}
\def\divv{\operatorname{div}}
\def\thetag{\eqref}
\def\R{\Bbb R}
\def\eb{\varepsilon}
\def\({\left(}
\def\){\right)}
\def\Dt{\partial_t}
\def\Dx{\Delta_x}
\def\Nx{\nabla_x}
\def\divv{\operatorname{div}}
\def\rott{\operatorname{curl}}
\newtheorem{proposition}{Proposition}[section]
\newtheorem{theorem}[proposition]{Theorem}
\newtheorem{corollary}[proposition]{Corollary}
\newtheorem{lemma}[proposition]{Lemma}
\theoremstyle{definition}
\newtheorem{definition}[proposition]{Definition}
\newtheorem{remark}[proposition]{Remark}
\numberwithin{equation}{section}
\def \no#1#2#3 {{\bf #1} (#3), #2.}
\def \eds#1#2#3 {#1, #2, #3.}
\begin{document}
\title[Dissipative Euler Equations] {Infinite energy solutions for Dissipative Euler equations in $\R^2$}

\author[V. Chepyzhov and S. Zelik] {Vladimir Chepyzhov${}^1$ and Sergey Zelik${}^2$}

\subjclass[2000]{35Q30,35Q35}
\keywords{Euler equations, Ekman damping, infinite energy solutions, weighted energy estimates, unbounded domains}
\thanks{
The second author would like to thank  Thierry Gallay  for the fruitful
discussions.}
\address{${}^1$ Institute for Information Transmission Problems, RAS,\newline
Bolshoy Karetniy 19, Moscow 101447, Russia;\newline
National Research University Higher School of Economics,\newline
Myasnitskaya Street 20, Moscow 101000, Russia},
\email{chep@iitp.ru}

\address{${}^2$ University of Surrey, Department of Mathematics, \newline
Guildford, GU2 7XH, United Kingdom.}
\email{s.zelik@surrey.ac.uk}

\begin{abstract} We study the  Euler equations with the so-called Ekman damping in the whole 2D space. The global well-posedness and dissipativity for the  weak infinite energy solutions of this problem in the uniformly local spaces is verified based on the further development of the weighted energy theory for the Navier-Stokes and Euler type problems. In addition, the existence of weak locally compact global attractor is proved and some extra compactness of this attractor is obtained.
\end{abstract}
\thanks{This work is partially supported by the Russian Foundation of Basic Researches (projects 14-01-00346 and 15-01-03587) and by  the grant  14-41-00044 of RSF}

 \maketitle
\tableofcontents
\section{Introduction}\label{s0}

We study the following dissipative Euler system in the whole plane $x\in\R^2$:
\begin{equation}\label{1}
\begin{cases}
\Dt u+(u,\Nx)u+\alpha u+\Nx p=g,\\
\divv u=0,\ \ u\big|_{t=0}=u_0,
\end{cases}
\end{equation}
which differs from the classical Euler equations by the presence of the so-called   Ekman damping term $\alpha u$ with $\alpha>0$.
These equations describe, for instance, a 2-dimensional fluid moving on a rough surface and are used in
geophysical models for large-scale processes in atmosphere and ocean. The term $\alpha u$ parameterizes
the main dissipation occurring in the planetary boundary layer (see, e.g., \cite{Ped79}; see also \cite{BP06} for the alternative source of  damped Euler equations).
\par
The mathematical features of these and related equations are studied in a number of papers (see, for instance, \cite{BCT,BF,CV08,Il91,IMT04,IlT06}) including the analytic properties (which are very similar to the classical Euler equations without dissipative term, see \cite{BT,Li,Li1,Tem77} and references therein), stability analysis, vanishing viscosity limit, etc.
\par
The attractors for damped Euler equations \eqref{1} in the case of bounded underlying domains have been studied in \cite{Il91,BF, IlT06,IMT04,CV08,CVZ11}. Remind that, in contrast to the Navier-Stokes case, the damped Euler equations remain hyperbolic and we do not have any smoothing property on a finite time interval. Moreover, even the asymptotic smoothness as time tends to infinity is much more delicate here. Indeed, similar to the classical Euler equations, following to Yudovich, see \cite{Yu1}, we have the global existence of smooth solutions, but  the best possible estimate for the smooth norms of these solutions grow faster than exponential in time, so they are not helpful for the attractor theory. Thus, to the best of our knowledge, there is no way to obtain  more regularity of the attractor than the $L^\infty(\Omega)$ bounds for the vorticity even in the case of bounded underlying domain $\Omega$ or periodic boundary conditions. On the other hand, the $L^\infty$-bounds for the vorticity cannot be essentially relaxed if we want to have the uniqueness of a solution. By this reason, the  {\it weak} attractors are normally used in order to describe the longtime behavior of solutions of the damped Euler equations. Some exception is the paper \cite{CVZ11} where the so-called trajectory attractor is constructed for this system in a {\it strong} topology of $W^{1,2}(\Omega)$ based on the enstrophy equality and the energy method.
\par
The situation becomes more complicated where the underlying domain becomes unbounded, say, $\Omega=\R^2$ and we are interested in the infinite energy solutions. Indeed, although in this case we have an immediate control of the $L^\infty$-norm of the vorticity from the maximum principle:
\begin{equation}
\|\omega(t)\|_{L^\infty}\le Ce^{-\alpha t}\|\omega(0)\|_{L^\infty}+\frac1\alpha\|\rott g\|_{L^\infty},\  \omega:=\rott u,
\end{equation}
this gives only growing in time (faster than exponentially) estimates for the velocity $u$, see \cite{GMS01,ST07} even in a more simple case of damped Navier-Stokes equations (see also \cite{Ser95,Kell} for the analogous results for Euler equations), so in order to get the dissipative bounds for the velocity field, we need to use the energy type estimates. For the case of damped Navier-Stokes equations these estimates have been obtained in \cite{Zel-JMFM} for the case where the initial data $u_0$ belong to the so-called uniformly local Sobolev spaces, see also \cite{Z07,Z08} for the analogous results for the case of Navier-Stokes equations in cylindrical domains as well as 
\cite{EMZ04,EZ01,MZ08} 
and references therein for general theory of dissipative PDEs in unbounded domains.
\par
The aim of the present paper is to build up the attractor theory for the damped Euler equation \eqref{1} in uniformly local spaces extending the results of \cite{CVZ11} and \cite{Zel-JMFM}.
 We assume that $g\in \Cal H_b$, where
\begin{equation}\label{2}
\Cal H_b:=\bigg\{u\in[L^2_b(\R^2)]^2,\ \divv u=0,\ \rott u\in L^\infty(\R^2)\bigg\}
\end{equation}
and $L^2_b(\R^2)$ is the usual uniformly local space determined by the norm
\begin{equation}\label{3}
\|u\|_{L^2_b}:=\sup_{x_0\in\R^2}\|u\|_{L^2(B^1_{x_0})}<\infty.
\end{equation}
Here and below, $B^R_{x_0}$ means the unit ball of radius $R$ in $\R^2$ centered at $x_0\in\R^2$. The norm in the space $\mathcal H_b$ is defined by the following natural formula:
\begin{equation}\label{0.h-norm}
\|u\|_{\mathcal H_b}:=\|u\|_{L^2_b}+\|\rott u\|_{L^\infty},
\end{equation}
 see Section \ref{s1} for more details.
\par
By definition, function $u=u(t,x)$ is a weak solution of \thetag{1} if $u(t)\in\Cal H_b$ for $t\ge0$
and satisfies equation \thetag{1} in the sense of distributions, see Definition \ref{Def0.1} below.
\par
The main result of the paper is the following theorem.

\begin{theorem}\label{Th0.main} Let the external forces $g\in\Cal H_b$. Then, for every $u_0\in\Cal H_b$, problem \eqref{1} possesses a unique weak solution $u(t)$ and this solution satisfies the following dissipative estimate:
\begin{equation}\label{0.dis}
\|u(t)\|_{\Cal H_b}\le Q(\|u_0\|_{\Cal H_b})e^{-\beta t}+Q(\|g\|_{\Cal H_b}),
\end{equation}
where the positive constant $\beta$ and monotone increasing function $Q$ are independent of $t\ge0$ and $u_0\in\Cal H_b$. The solution semigroup $S(t):\Cal H_b\to\Cal H_b$ associated with this equation possesses a weak locally compact global attractor $\Cal A$ in the phase space $\Cal H_b$, see Definition \ref{Def5.attr}. Moreover, this attractor is a compact set in $[W^{1,p}_{loc}(\R^2)]^2$, for every $p<\infty$ and attracts bounded sets of $\Cal H_b$ in the topology of $[W^{1,p}_{loc}(\R^2)]^2$.
\end{theorem}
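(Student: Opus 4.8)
The plan is to decompose the statement into four tasks---a priori estimates, uniqueness, existence, and the attractor---with the a priori estimates carrying most of the weight.

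\textbf{A priori estimates.} First I would exploit the two-dimensional structure. Applying $\rott$ to \eqref{1} eliminates the pressure and, since there is no vortex stretching in 2D, turns the equation into the transport equation $\Dt\omega+(u,\Nx)\omega+\alpha\omega=\rott g$ for the vorticity $\omega=\rott u$. Integrating along the divergence-free characteristics $\dot X=u(t,X)$ yields the maximum principle $\|\omega(t)\|_{L^\infty}\le e^{-\alpha t}\|\omega(0)\|_{L^\infty}+\alpha^{-1}\|\rott g\|_{L^\infty}$, which controls the $L^\infty$-part of the $\Cal H_b$-norm. Crucially, I would then recover local control of the velocity itself from the div--curl system $\divv u=0$, $\rott u=\omega$: elliptic regularity on concentric balls gives $\|u\|_{W^{1,p}(B^1_{x_0})}\le C(\|u\|_{L^2(B^2_{x_0})}+\|\omega\|_{L^\infty})$ for every finite $p$, hence $\|u\|_{L^\infty_b}\le C(\|u\|_{L^2_b}+\|\omega\|_{L^\infty})$ by Sobolev embedding. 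This is exactly the mechanism behind the extra regularity of the attractor later.

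\textbf{The weighted energy estimate (main obstacle).} The $L^2_b$-part of \eqref{0.dis} is the heart of the matter and cannot be obtained by a plain energy identity because the energy is infinite. I would fix a family of admissible exponential weights $\theta_{\eps,x_0}(x)=e^{-\eps|x-x_0|}$ satisfying $|\Nx\theta_{\eps,x_0}|\le\eps\,\theta_{\eps,x_0}$, note that $\sup_{x_0}\int|u|^2\theta_{\eps,x_0}$ is equivalent to $\|u\|_{L^2_b}^2$, and test \eqref{1} with $u\,\theta_{\eps,x_0}$. Using $\divv u=0$, both the inertial term and the pressure term reduce to integrals carrying one factor of $\Nx\theta_{\eps,x_0}$, namely $-\tfrac12\int|u|^2(u,\Nx\theta_{\eps,x_0})$ and $-\int p\,(u,\Nx\theta_{\eps,x_0})$, each bounded by $C\eps$ times a weighted integral, while the damping $\alpha u$ supplies the coercive term $\alpha\int|u|^2\theta_{\eps,x_0}$. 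The main difficulty is twofold: the inertial term is \emph{super-linear} in $u$, and the pressure is \emph{nonlocal}, determined by $-\Dx p=\sum_{i,j}\partial_i u_j\,\partial_j u_i$. I would control the cubic term through the previous step, $\int|u|^3\theta\le\|u\|_{L^\infty_b}\int|u|^2\theta$, and estimate the pressure in the weighted norm by a Calder\'on--Zygmund argument adapted to the weight, again producing factors of $\|u\|_{L^\infty_b}$ and hence of $\|\omega\|_{L^\infty}$. Choosing $\eps$ small enough to absorb the $O(\eps)$ contributions into $\alpha$ and taking the supremum over $x_0$, I expect to close a differential inequality of the form $\tfrac{d}{dt}\mathbf E+\beta\mathbf E\le Q(\|\omega\|_{L^\infty})$ for $\mathbf E(t)=\sup_{x_0}\int|u|^2\theta_{\eps,x_0}$; combined with the vorticity bound and Gronwall's lemma this gives \eqref{0.dis}. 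Making the nonlocal pressure estimate and the absorption of the super-linear term rigorous in the uniformly local scale is the principal technical obstacle, and is where the weighted energy theory of the earlier sections is indispensable.

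\textbf{Uniqueness and existence.} For uniqueness I would follow Yudovich: the difference $w=u_1-u_2$ solves $\Dt w+(u_1,\Nx)w+(w,\Nx)u_2+\alpha w+\Nx(p_1-p_2)=0$; testing with $w\,\theta_{\eps,x_0}$ and using that $\omega_i\in L^\infty$ makes $\Nx u_2$ only log-Lipschitz, I would obtain an Osgood-type inequality for the weighted difference energy whose unique solution is zero. For existence I would build approximate solutions (Galerkin, vanishing viscosity, or exhaustion by bounded domains), derive the same uniform estimates, and pass to the limit; the vorticity bound together with the div--curl regularity yields, via a weighted Aubin--Lions compactness argument, enough local strong compactness in space and time to identify the nonlinear and pressure terms in the limit.

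\textbf{The attractor.} The dissipative estimate \eqref{0.dis} furnishes a bounded absorbing set in $\Cal H_b$, so $S(t)$ is a dissipative semigroup. Since bounded sets of $\Cal H_b$ are not norm-compact, I would construct the attractor $\Cal A$ as the set of all complete bounded trajectories (equivalently, the $\omega$-limit set of the absorbing set) in the weak, locally compact topology of Definition \ref{Def5.attr}, using weak-$\ast$ compactness in $L^2_b$ together with the local strong compactness just obtained to verify attraction and invariance. Finally, the extra smoothness is a direct consequence of the first step: every $u\in\Cal A$ has $\rott u\in L^\infty$ with a uniform bound, whence $\Cal A$ is bounded in $W^{1,p}_b$ for all $p<\infty$; Rellich's theorem then gives compactness in $W^{1,p}_{loc}(\R^2)$, and the uniform $W^{1,p}_b$-bound upgrades the weak attraction to attraction in the $W^{1,p}_{loc}$-topology.
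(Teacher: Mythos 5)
Your overall architecture (vorticity maximum principle, weighted $L^2$ energy estimate for the velocity, Yudovich-type uniqueness, weak locally compact attractor) matches the paper's, but the central step does not close as you describe it. Setting $E(t):=\sup_{x_0}\int_{\R^2}|u(t,x)|^2e^{-\eb|x-x_0|}\,dx$, your bound $\int|u|^3e^{-\eb|x-x_0|}\le\|u\|_{L^\infty_b}\int|u|^2e^{-\eb|x-x_0|}$ combined with $\|u\|_{L^\infty_b}\le C(\|u\|_{L^2_b}+\|\omega\|_{L^\infty})$ and $\|u\|_{L^2_b}^2\le CE$ turns the transport and pressure contributions into a term of order $C\eb\,E^{3/2}$, which is \emph{superlinear} in the unknown: it can be absorbed into $\alpha E$ only on the set where $E^{1/2}\le c\alpha/(C\eb)$, while the initial value and the forcing contribution to $E$ are of order $\eb^{-2}$ times the data. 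These two requirements are compatible only for small data, so ``choosing $\eb$ small enough'' does not produce \eqref{0.dis} for arbitrary $u_0$. This is exactly the obstruction that the viscous term resolves in the Navier--Stokes case and that the paper overcomes here with the new interpolation inequality \eqref{A.1}, $\|u\|^3_{L^3(B^R_{x_0})}\le C\bigl(R^{-1}\|u\|^3_{L^2(B^{2R}_{x_0})}+\|u\|^{5/2}_{L^2(B^{2R}_{x_0})}\|\omega\|^{1/2}_{L^\infty(B^{2R}_{x_0})}\bigr)$: trading half a power of $u$ for half a power of the already-controlled vorticity yields the extra factor $R^{-1/2}$ in \eqref{21}, and it is this gain that lets the scale $R$ (your $\eb^{-1}$) be fixed, by the bootstrap \eqref{4.loop}--\eqref{RR}, as a function of $\|u_0\|_{\Cal H_b}+\|g\|_{\Cal H_b}$ with no smallness restriction. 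A secondary but necessary point: the pressure term must be integrated by parts against $p-p_{x_0}$ for a suitably chosen constant $p_{x_0}$ as in Lemma \ref{Lem0.2}, since $p$ itself is only of class BMO and may grow logarithmically, so the weighted Calder\'on--Zygmund bound you invoke does not apply to $p$ directly.

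The last assertion of the theorem is also not proved by your argument. A uniform bound for $\Cal A$ in $W^{1,p}_b$ together with Rellich's theorem gives compactness and attraction only in $W^{1-\eb,p}_{loc}$ for $\eb>0$ (this is Corollary \ref{Cor5.ws}); the embedding of $W^{1,p}(B)$ into itself is not compact, so the endpoint case $\eb=0$ stated in the theorem does not follow from boundedness. The paper obtains it by the energy method: one first justifies the weighted enstrophy equality \eqref{4.enst} via DiPerna--Lions mollification (the product $((u,\Nx)\omega,\omega\phi_{\eb,x_0})$ is not a priori defined for $\omega\in L^\infty$ only), and then uses this balance, weak lower semicontinuity of the convex integrand, and the equality along complete trajectories to upgrade the weak convergence $\omega_n(0)\rightharpoonup\omega(0)$ in $L^2_{\phi_{\eb,x_0}}$ to convergence of norms, hence to strong convergence. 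Your proposal never invokes the enstrophy equality, and without it the claimed compactness and attraction in $W^{1,p}_{loc}$ remain unestablished.
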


The paper is organized as follows.
\par
In Section \ref{s1}, we recall the definitions and basic properties of the weighted and uniformly local Sobolev spaces, introduce special classes of weights and remind a number of elementary inequalities which will be used throughout of the paper.
\par
In Section \ref{s2}, we introduce (following \cite{Zel-JMFM}) a number of technical tools which allows us to treat the pressure term in the proper weighted and uniformly local spaces as well as to exclude it from the various weighted energy estimates.
\par
Section \ref{s3} is devoted to the derivation of the basic dissipative estimate \eqref{0.dis}. In this section, based on the new version of the interpolation inequality (proved in the Appendix), we extend the method initially suggested in \cite{Zel-JMFM} for the case of damped Navier-Stokes equations to more difficult case of zero viscosity. Moreover, we also indicate here some improvements of the results concerning the classical Navier-Stokes and Euler equations (which corresponds to the case of $\alpha=0$). In this case, the solution $u(t)$ may grow as $t\to\infty$ and as proved in \cite{Zel-JMFM} the growth rate is restricted by the quintic polynomial in time. As indicated in Remark \ref{Rem3.NS0}, using the approach developed in this paper, we may replace the quintic polynomial by the cubic one:
\begin{equation}
\|u(t)\|_{L^2_b(\R^2)}\le C(t+1)^3.
\end{equation}
Moreover, in the particular case $g=0$ this estimate can be further improved:
\begin{equation}
\|u(t)\|_{L^2_b(\R^2)}\le C(t+1),
\end{equation}
see also the recent work \cite{Gal14} where the analogous linear growth estimate has been established for the infinite energy solutions of the Navier-Stokes equations.
\par
The uniqueness of the weak solution of \eqref{1} is verified in Section \ref{s4} by adapting the famous Yudovich proof to the case of weighted and uniformly local spaces. Note that, in contrast to \cite{Ser95} and \cite{Kell}, our approach does not use the so-called Serfati identity and is based on the Yudovich type estimates in weighted $L^2$-spaces.
 Moreover, following \cite{DiL}, we establish here the so-called weighted enstrophy equality which plays a crucial role in verifying the strong compactness of the attractor.
\par
Finally, the weak locally compact attractor $\Cal A$ is constructed in Section \ref{s5}. Moreover, using the above mentioned weighted enstrophy equality and the energy method (analogously to \cite{CVZ11}), we prove the compactness of this weak attractor in the strong topology of the space $[W^{1,p}_{loc}(\R^2)]^2$ for any $p<\infty$.
\par
Note also that, in contrast to the case of Navier-Stokes equations, in the case of Euler equations, the $L^\infty$-estimate for the
vorticity holds not only for $\R^2$, but for more or less general unbounded domains. This allows to extend the results of the paper to the case of unbounded domains different from $\R^2$. To this end, one just needs to modify formula \eqref{0.6} for pressure by including the proper boundary terms. We return to this problem somewhere else.

\section{Preliminaries I: Weighted and uniformly local spaces}\label{s1}

In this section, we briefly discuss the definitions and basic properties of the weighted and uniformly local Sobolev spaces (see \cite{MZ08,Z07,Z03} for more detailed exposition). We start with the class of admissible weight functions and associated weighted spaces.

\begin{definition}\label{Def0.5.weights} A positive function $\phi(x)$, $x\in\R^2$, is a weight function of exponential growth rate $\mu\ge0$ if
\begin{equation}\label{1.weight}
\phi(x+y)\le C e^{\mu|y|}\phi(x),\ \ x,y\in\R^2.
\end{equation}
The associated weighted Lebesgue space $L^p_\phi(\R^2)$, $1\le p<\infty$, is defined as a subspace of functions belonging to $L^p_{loc}(\R^2)$ for which the following norm is finite:
\begin{equation}\label{1.wspace}
\|u\|^p_{L^p_{\phi}}:=\int_{\R^2}\phi(x)|u(x)|^p\,dx<\infty
\end{equation}
and the Sobolev space $W^{l,p}_\phi(\R^2)$ is the subspace of distributions $u\in \mathcal D'(\R^2)$ whose derivatives up to order $l$ inclusively belong to $L^p_\phi(\R^2)$ (this works for positive integer $l$ only, for fractional and negative $l$, the space $W^{l,p}_\phi$ is defined using the interpolation and duality arguments, see \cite{EZ01,Z07} for more details).
\end{definition}
The typical examples of weight functions of exponential growth rate are
\begin{equation}\label{1.exweight}
\phi(x):=e^{-\eb|x-x_0|}\ \ \text{or}\ \ \phi(x):=e^{-\sqrt{1+\eb^2|x-x_0|^2}},\ \ \eb\in\R,\ \ x_0\in\R^2.
\end{equation}
Another class of admissible weights of exponential growth rate are the so-called polynomial weights and, in particular, the weight function
\begin{equation}\label{1.theta}
\theta_{x_0}(x):=\frac1{1+|x-x_0|^3},\ \ x_0\in\R^2,
\end{equation}
which will be essentially used throughout of the paper.
\par
Next, we define the so-called uniformly local Sobolev spaces.

\begin{definition}\label{Def1.ul} The space $L^p_b(\R^2)$ is defined as the subspace of functions of $L^p_{loc}(\R^2)$ for which the following norm is finite:
\begin{equation}\label{1.ul}
\|u\|_{L^p_b}:=\sup_{x_0\in\R^2}\|u\|_{L^p(B^1_{x_0})}<\infty
\end{equation}
(here and below $B^R_{x_0}$ denotes the $R$-ball in $\R^2$ centered at $x_0$).
The spaces $W^{l,p}_b(\R^2)$  are defined as subspaces of distributions $u\in\mathcal D'(\R^2)$ whose derivatives up to order $l$ inclusively belong to the space $L^p_b(\R^2)$.
\par
\end{definition}

The next  proposition gives the useful equivalent norms in the weighted Sobolev spaces
\begin{proposition}\label{Prop1.equiv} Let $\phi$ be the weight function of exponential growth rate and let $1\le p<\infty$, $l\in\mathbb R$ and $R>0$.
 Then,
\begin{equation}\label{1.equiv}
C_1\int_{x_0\in\R^2}\phi(x_0)\|u\|_{W^{l,p}(B^R_{x_0})}^p\,dx_0\le \|u\|_{W^{l,p}_\phi}^p\le C_2\int_{x_0\in\R^2}\phi(x_0)\|u\|_{W^{l,p}(B^R_{x_0})}^p\,dx_0,
\end{equation}
where the constants $C_i$ depend on $R$, $l$ and $p$ and the constants $C$ and $\mu$ from \eqref{1.weight}, but are independent of $u$ and of the concrete choice of the weight $\phi$.
\end{proposition}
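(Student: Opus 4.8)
The plan is to prove the two-sided estimate $\eqref{1.equiv}$ by passing through an intermediate quantity built from the weight evaluated on a grid of balls, and then comparing both the integral $\int_{\R^2}\phi(x_0)\|u\|_{W^{l,p}(B^R_{x_0})}^p\,dx_0$ and the genuine weighted norm $\|u\|_{W^{l,p}_\phi}^p$ against that grid quantity. First I would reduce to the case $l=0$, $p\ge 1$ an integer-free Lebesgue statement: for integer $l$ the $W^{l,p}$ norm is a finite sum of $L^p$ norms of derivatives $\partial^\gamma u$ with $|\gamma|\le l$, and each such term can be treated separately because $\eqref{1.weight}$ holds for the same $\phi$ regardless of which derivative we insert; the fractional and negative cases then follow by the interpolation and duality conventions referenced after Definition $\ref{Def0.5.weights}$, so the heart of the matter is the plain weighted Lebesgue estimate.

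For the Lebesgue case the key mechanism is Fubini together with the finite-overlap property of the cover by balls of radius $R$. For the right-hand inequality I would write, using Tonelli,
\begin{equation}\label{plan.fubini}
\int_{\R^2}\phi(x_0)\int_{B^R_{x_0}}|u(x)|^p\,dx\,dx_0=\int_{\R^2}|u(x)|^p\(\int_{B^R_{x}}\phi(x_0)\,dx_0\)dx,
\end{equation}
where the inner integration uses $|x-x_0|\le R\iff x_0\in B^R_x$. By the exponential growth bound $\eqref{1.weight}$ we have $\phi(x_0)\le C e^{\mu|x-x_0|}\phi(x)\le Ce^{\mu R}\phi(x)$ for $x_0\in B^R_x$, so the inner integral is bounded above by $Ce^{\mu R}|B^R|\,\phi(x)$. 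Substituting this into $\eqref{plan.fubini}$ gives the lower bound for $\|u\|_{L^p_\phi}^p$ with the first inequality's constant $C_1$ (after reading the chain in the stated direction). The reverse comparison is symmetric: from $\eqref{1.weight}$ applied with the roles of the points swapped we also get a lower bound $\phi(x_0)\ge c\,e^{-\mu R}\phi(x)$ for $x_0\in B^R_x$, whence the inner integral in $\eqref{plan.fubini}$ is bounded below by $ce^{-\mu R}|B^R|\,\phi(x)$, producing the upper bound for $\|u\|_{L^p_\phi}^p$ with constant $C_2$. Crucially, both constants depend only on $R$, $p$, and the pair $(C,\mu)$ from $\eqref{1.weight}$, and not on the specific weight or on $u$, which is exactly the uniformity claimed.

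The main obstacle, and the only step requiring genuine care, is the reverse direction getting a uniform \emph{lower} pointwise bound $\phi(x_0)\ge c\,e^{-\mu|x-x_0|}\phi(x)$ from the one-sided hypothesis $\eqref{1.weight}$. This does not need a separate assumption: applying $\eqref{1.weight}$ with $x$ replaced by $x+y$ and $y$ replaced by $-y$ yields $\phi(x)\le Ce^{\mu|y|}\phi(x+y)$, i.e. $\phi(x+y)\ge C^{-1}e^{-\mu|y|}\phi(x)$, which is precisely the lower bound with $c=C^{-1}$. So the growth condition is in effect a two-sided condition once one exploits its translation-invariant form, and this observation is what makes the constants in $\eqref{1.equiv}$ come out independent of $\phi$. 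I would remark that the argument is insensitive to $R$ in the sense that different radii give equivalent right-hand sides (the balls of radius $R$ and $R'$ each cover the plane with finite overlap controlled by $\mu$), which is why the proposition is stated for an arbitrary fixed $R>0$; the verification that changing $R$ only alters the constants follows by the same two inequalities applied with the larger radius.
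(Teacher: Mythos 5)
Your argument is correct and is essentially the standard proof of this equivalence: the paper itself gives no proof and simply refers to \cite{EZ01}, where the same mechanism appears --- Tonelli to swap the order of integration, followed by the observation that \eqref{1.weight}, read in its translation-invariant form, is automatically two-sided (your substitution $\phi(x)\le Ce^{\mu|y|}\phi(x+y)$), so that $\int_{B^R_x}\phi(x_0)\,dx_0$ is comparable to $R^2\phi(x)$ with constants depending only on $C$, $\mu$, $R$. The one thin spot is the case of fractional and negative $l$, where \eqref{1.equiv} is not a consequence of the Fubini computation alone but requires checking that the interpolation/duality construction of $W^{l,p}_\phi$ is compatible with the localized norms; both you and the paper dispose of this by appeal to the conventions of \cite{EZ01}, which is acceptable here but should be acknowledged as a citation rather than a proof.
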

For the proof of these estimates, see e.g., \cite{EZ01}.
\par
Thus, the norms $\int_{x_0\in\R^2}\phi(x_0)\|u\|_{W^{l,p}(B^R_{x_0})}^p\,dx_0$ computed with different $R$'s are equivalent.
\par
The next Proposition gives relations between the weighted and uniformly local norms.

\begin{proposition}\label{Prop1.more} Let $\phi$ be the weight  of exponential growth rate such that $\int_{x\in\R^2}\phi\,dx<\infty$. Then, for every $u\in W^{l,p}_b(\R^2)$ and every $\kappa\ge 1$,
\begin{equation}\label{1.ul-w}
\|u\|_{W^{l,p}(B^\kappa_{x_0})}^p\le C\int_{y\in B^\kappa_{x_0}}\|u\|_{W^{l,p}(B^1_y)}^p\,dy\le C_\kappa\int_{y\in\R^2}\phi(y-x_0)\|u\|^p_{W^{l,p}(B^1_{y})}\,dy
\end{equation}
and, in particular, fixing $\kappa=1$ in \eqref{1.ul-w} and taking the supremum with respect to $x_0\in\R^2$, we have
\begin{equation}\label{1.ul-w1}
\|u\|_{W^{l,p}_b}\le C\sup_{x_0\in\R^2}\|u\|_{W^{l,p}_{\phi(\cdot-x_0)}},
\end{equation}
where $C$ is independent of $u$ and the concrete choice of the weight $\phi$. In addition,
\begin{equation}\label{1.w-ul}
\|u\|_{W^{l,p}_\phi}^p\le C\|\phi\|_{L^1}\|u\|_{W^{l,p}_b}^p,
\end{equation}
where $C$ is also independent of $u$ and the concrete choice of $\phi$.
\end{proposition}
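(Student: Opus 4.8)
The plan is to establish the two inequalities in \eqref{1.ul-w} first (for positive integer $l$; the fractional and negative cases then follow from the interpolation/duality definition of $W^{l,p}_\phi$), to read off \eqref{1.ul-w1} by specializing $\kappa=1$ and invoking Proposition \ref{Prop1.equiv}, and finally to prove \eqref{1.w-ul} by a short separate argument. Throughout I would abbreviate $\rho(z):=\sum_{|\beta|\le l}|D^\beta u(z)|^p$, so that $\|u\|^p_{W^{l,p}(\Omega)}=\int_\Omega\rho(z)\,dz$ for any measurable $\Omega\subset\R^2$.

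For the left-hand inequality of \eqref{1.ul-w} I would apply Fubini's theorem to the middle term,
\[
\int_{y\in B^\kappa_{x_0}}\|u\|^p_{W^{l,p}(B^1_y)}\,dy=\int_{y\in B^\kappa_{x_0}}\int_{|z-y|<1}\rho(z)\,dz\,dy=\int_{\R^2}\rho(z)\,\big|B^1_z\cap B^\kappa_{x_0}\big|\,dz.
\]
The key point is that for every $z\in B^\kappa_{x_0}$ the overlap measure $|B^1_z\cap B^\kappa_{x_0}|$ is bounded below by a purely geometric constant $c_0>0$, uniform in $z$, $x_0$ and $\kappa\ge1$ (the worst case being $z\in\partial B^\kappa_{x_0}$, whose curvature radius $\kappa$ is at least that of the unit ball, so at least a fixed fraction of $B^1_z$ stays inside). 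Restricting the $z$-integration to $B^\kappa_{x_0}$ then gives the first inequality with $C=c_0^{-1}$. For the right-hand inequality I would bound the indicator of $B^\kappa_{x_0}$ by the weight: with $m_\kappa:=\inf_{|w|\le\kappa}\phi(w)$, the growth condition \eqref{1.weight} applied with $x=w$, $y=-w$ yields $\phi(0)\le Ce^{\mu|w|}\phi(w)\le Ce^{\mu\kappa}\phi(w)$, hence $m_\kappa\ge C^{-1}e^{-\mu\kappa}\phi(0)$. Therefore $\mathbf 1_{B^\kappa_{x_0}}(y)\le m_\kappa^{-1}\phi(y-x_0)$, and integration produces the second inequality with $C_\kappa=m_\kappa^{-1}\le Ce^{\mu\kappa}/\phi(0)$; under the standing normalization $\phi(0)=1$ of the admissible weights (compare the examples \eqref{1.exweight} and \eqref{1.theta}) this $C_\kappa$ depends only on $\kappa$ and the constants $C,\mu$ of \eqref{1.weight}, not on the particular $\phi$.

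To deduce \eqref{1.ul-w1} I would set $\kappa=1$ in \eqref{1.ul-w} and apply the lower bound of Proposition \ref{Prop1.equiv} to the translated weight $\phi(\cdot-x_0)$ (still of exponential growth rate $\mu$ with the same constant $C$), which controls $\int_{\R^2}\phi(y-x_0)\|u\|^p_{W^{l,p}(B^1_y)}\,dy$ by $\|u\|^p_{W^{l,p}_{\phi(\cdot-x_0)}}$; taking the supremum over $x_0$ and the $p$-th root gives \eqref{1.ul-w1} with a $\phi$-independent constant. Finally, \eqref{1.w-ul} follows from the upper bound in Proposition \ref{Prop1.equiv}, namely $\|u\|^p_{W^{l,p}_\phi}\le C_2\int_{\R^2}\phi(x_0)\|u\|^p_{W^{l,p}(B^1_{x_0})}\,dx_0$: bounding $\|u\|^p_{W^{l,p}(B^1_{x_0})}\le\|u\|^p_{W^{l,p}_b}$ via \eqref{1.ul} and pulling this supremum out of the integral leaves $C_2\|u\|^p_{W^{l,p}_b}\int_{\R^2}\phi\,dx=C_2\|\phi\|_{L^1}\|u\|^p_{W^{l,p}_b}$, as required, with $C=C_2$ independent of $\phi$.

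The individual computations are elementary; the only genuinely quantitative point requiring care is the uniform geometric lower bound $|B^1_z\cap B^\kappa_{x_0}|\ge c_0$ holding for all $z\in B^\kappa_{x_0}$ at once and for every $\kappa\ge1$, and, more importantly, the book-keeping guaranteeing that each constant is independent of the chosen weight $\phi$. The latter rests entirely on Proposition \ref{Prop1.equiv} already having $\phi$-independent constants together with the normalization of the admissible weight class; once these are granted, the proof reduces to assembling the three estimates above.
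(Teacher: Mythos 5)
Your proof is correct. Note that the paper itself gives no argument for Proposition \ref{Prop1.more} --- it simply refers to \cite{Z07,Z03} --- and what you have written is essentially the standard covering/Fubini argument used there: the Tonelli computation with the uniform lower bound $|B^1_z\cap B^\kappa_{x_0}|\ge c_0$ for $z\in B^\kappa_{x_0}$, $\kappa\ge1$ (which does hold, e.g.\ because $B^{1/2}_{z'}\subset B^1_z\cap B^\kappa_{x_0}$ for $z'$ the point at distance $\min(1/2,|z-x_0|)$ from $z$ towards $x_0$), the pointwise bound $\mathbf 1_{B^\kappa_{x_0}}(y)\le m_\kappa^{-1}\phi(y-x_0)$ coming from \eqref{1.weight}, and then Proposition \ref{Prop1.equiv} to pass between the discrete and continuous forms of the weighted norm. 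One point you raise deserves emphasis: the constant in \eqref{1.ul-w1} genuinely cannot be independent of $\phi$ without a normalization, since replacing $\phi$ by $\lambda\phi$ rescales the right-hand side by $\lambda^{1/p}$ while leaving the left-hand side unchanged; the paper never states the normalization $\phi(0)=1$ explicitly (and its examples \eqref{1.exweight}, \eqref{1.theta} are only normalized up to a fixed factor), but some such convention is tacitly in force, and in the places where \eqref{1.ul-w1} is actually used it is paired with \eqref{1.w-ul}, whose constants combine into a scale-invariant quantity, so nothing in the paper is affected. Your restriction of the Fubini argument to integer $l$, with the fractional and negative cases delegated to the interpolation/duality definition of the spaces, is likewise consistent with how the paper defines $W^{l,p}_\phi$.
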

For the proof of these results, see e.g., \cite{Z07,Z03}.
\par
The next lemma gives a simple, but important estimate for the weights $\theta_{x_0}(x)$ which will allow us to handle the convolution operators in weighted spaces.
\begin{lemma}\label{Lem1.weight} Let $\theta_{x_0}(x)$ be the weight defined via \eqref{1.theta}. Then, the following estimate holds:
\begin{equation}\label{1.thetakey}
\int_{x\in\R^2}\theta_{x_0}(x)\theta_{y_0}(x)\,dx\le C\theta_{x_0}(y_0),
\end{equation}
where $C$ is independent of $x_0,y_0\in\R^2$.
\end{lemma}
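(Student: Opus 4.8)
The plan is to exploit translation invariance to reduce the bound to a one-parameter estimate. Since $\theta_{x_0}(x)=\theta_0(x-x_0)$ and Lebesgue measure is translation invariant, I would set $x_0=0$ and write $r:=|y_0|$, so that the claim becomes
\[
I(r):=\int_{\R^2}\frac{dx}{(1+|x|^3)(1+|x-y_0|^3)}\le\frac{C}{1+r^3},
\]
recalling that the right-hand side of \eqref{1.thetakey} is exactly $C\,\theta_0(y_0)=C/(1+r^3)$. The only global input needed is that $\theta_0\in L^1(\R^2)$: in polar coordinates $\int_{\R^2}\theta_0\,dx=2\pi\int_0^\infty \rho(1+\rho^3)^{-1}\,d\rho=:C_0<\infty$, the integral converging because the integrand decays like $\rho^{-2}$ at infinity. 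By translation invariance $\int_{\R^2}\theta_{y_0}\,dx=C_0$ as well.

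The core idea is a dichotomy based on which of the two centres the point $x$ is closer to, so that in each region one factor can be frozen at size $\sim(1+r^3)^{-1}$ while the other is integrated freely. First I would record the elementary inequality $1+(r/2)^3\ge\tfrac18(1+r^3)$ for all $r\ge0$, equivalently $(1+(r/2)^3)^{-1}\le 8(1+r^3)^{-1}$. Then split $\R^2=A\cup B$ with $A:=\{|x|\le r/2\}$ and $B:=\{|x|>r/2\}$. On $A$ the triangle inequality gives $|x-y_0|\ge r-|x|\ge r/2$, hence $\theta_{y_0}(x)\le(1+(r/2)^3)^{-1}\le 8(1+r^3)^{-1}$, so
\[
\int_A \theta_0(x)\theta_{y_0}(x)\,dx\le \frac{8}{1+r^3}\int_A\theta_0(x)\,dx\le\frac{8C_0}{1+r^3}.
\]
Symmetrically, on $B$ one has $\theta_0(x)\le(1+(r/2)^3)^{-1}\le 8(1+r^3)^{-1}$, whence $\int_B\theta_0\theta_{y_0}\,dx\le 8C_0/(1+r^3)$. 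Adding the two contributions yields $I(r)\le 16C_0/(1+r^3)$, which is the claim with $C=16C_0$.

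I do not anticipate a genuine obstacle here; the estimate is elementary once the right decomposition is chosen. The only point requiring a little care is the choice of the splitting: the power $3$ in $\theta_{x_0}$ exceeds the dimension $2$, which is precisely what makes $\theta_0$ integrable and lets the ``free'' factor contribute a finite constant $C_0$ rather than a logarithmic or divergent term. Had the exponent been $\le 2$, the same scheme would fail and one would only obtain a weaker, $r$-dependent bound. Thus the main conceptual step is recognising that the product of two such weights is controlled by the single weight evaluated at the separation $|x_0-y_0|$, with the constant governed entirely by $\|\theta_0\|_{L^1}$.
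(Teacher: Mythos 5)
Your proof is correct and rests on exactly the same two facts as the proof the paper refers to in \cite{Zel-JMFM}: that at every point $x$ at least one of $|x-x_0|$, $|x-y_0|$ is at least $\tfrac12|x_0-y_0|$, and that $\theta_0\in L^1(\R^2)$. The cited argument packages this as the pointwise bound $\theta_{x_0}(x)\theta_{y_0}(x)\le 4\,\theta_{x_0}(y_0)\bigl(\theta_{x_0}(x)+\theta_{y_0}(x)\bigr)$, obtained from the identity $\frac1{ab}=\frac1{a+b}\bigl(\frac1a+\frac1b\bigr)$ together with the triangle inequality, and then integrates, whereas you achieve the same effect by splitting $\R^2$ into the two regions $\{|x-x_0|\le\tfrac12|x_0-y_0|\}$ and its complement --- a purely presentational difference.
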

For the proof of this lemma see, e.g., \cite{Zel-JMFM}.
\begin{corollary}\label{Cor1.conv} Let $\theta_{x_0}(x)$ be defined via \eqref{1.theta}. Then, for every $u\in L^p_{\theta_{x_0}}(\R^2)$, we have
\begin{equation}\label{1.inv}
\|u\|_{L^p_{\theta_{y_0}}}^p\le C\int_{x_0\in\R^2}\theta_{y_0}(x_0)\|u\|^p_{L^p_{\theta_{x_0}}}\,dx_0,
\end{equation}
where $C$ is independent of $y_0\in\R$.
\end{corollary}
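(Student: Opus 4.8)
The plan is to unfold both sides into explicit integrals, interchange the order of integration on the right-hand side by Tonelli's theorem (legitimate because every factor is nonnegative), and thereby reduce the statement to a pointwise lower bound on a convolution-type kernel. Unfolding the definition \eqref{1.wspace} of the weighted norms, the right-hand side becomes
\[
C\int_{x_0\in\R^2}\theta_{y_0}(x_0)\left(\int_{\R^2}\theta_{x_0}(x)|u(x)|^p\,dx\right)dx_0=C\int_{\R^2}|u(x)|^p\,K(x,y_0)\,dx,
\]
where
\[
K(x,y_0):=\int_{x_0\in\R^2}\theta_{y_0}(x_0)\,\theta_{x_0}(x)\,dx_0 .
\]
Since the left-hand side equals $\|u\|^p_{L^p_{\theta_{y_0}}}=\int_{\R^2}\theta_{y_0}(x)|u(x)|^p\,dx$, it suffices to prove the uniform pointwise bound $K(x,y_0)\ge c\,\theta_{y_0}(x)$ with $c>0$ independent of $x$ and $y_0$; the corollary then follows with $C=1/c$.

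Next I would note that $\theta_{x_0}(x)$ depends only on $|x-x_0|$, so $\theta_{x_0}(x)=\theta_x(x_0)$ and $K(x,y_0)=\int_{x_0\in\R^2}\theta_{y_0}(x_0)\theta_x(x_0)\,dx_0$. With this symmetry, Lemma \ref{Lem1.weight} supplies precisely the matching \emph{upper} bound $K(x,y_0)\le C\theta_{y_0}(x)$. The actual content of the corollary is therefore the reverse inequality, which the lemma does not give and which must be established directly.

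To get the lower bound I would discard most of the domain of integration and keep only $x_0\in B^1_{y_0}$. There $|x_0-y_0|\le1$ forces $\theta_{y_0}(x_0)\ge\tfrac12$, while the triangle inequality $|x_0-x|\le1+|x-y_0|$ combined with the elementary comparison $1+(1+d)^3\le C_0(1+d^3)$ (uniform over $d\ge0$, since the ratio is continuous, equals $2$ at $d=0$ and tends to $1$ as $d\to\infty$) yields $\theta_{x_0}(x)\ge C_0^{-1}\theta_{y_0}(x)$. Consequently
\[
K(x,y_0)\ge\int_{B^1_{y_0}}\tfrac12\,C_0^{-1}\,\theta_{y_0}(x)\,dx_0=\tfrac12\,C_0^{-1}\,|B^1_{y_0}|\,\theta_{y_0}(x),
\]
which is the required estimate with a constant independent of $x$ and $y_0$ by translation invariance.

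The computation is routine; the only subtle point is recognizing that Lemma \ref{Lem1.weight} delivers the wrong-direction (upper) estimate, so the corollary genuinely requires the complementary lower bound rather than a direct citation of the lemma. Together these give the two-sided comparability $c\,\theta_{y_0}(x)\le K(x,y_0)\le C\,\theta_{y_0}(x)$, from which the stated inequality follows at once via Tonelli's theorem.
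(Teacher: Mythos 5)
Your proof is correct, and its skeleton (unfold the norms, apply Tonelli, reduce to a pointwise estimate on the kernel $K(x,y_0)=\int_{x_0}\theta_{y_0}(x_0)\theta_{x_0}(x)\,dx_0$) is the same one the paper has in mind: the paper dismisses the corollary as ``an immediate consequence of \eqref{1.thetakey} and the Fubini theorem'' and otherwise defers to \cite{Zel-JMFM}. The genuinely valuable point in your write-up is the one you flag yourself: after Tonelli, the inequality \eqref{1.inv} is equivalent to the \emph{lower} bound $K(x,y_0)\ge c\,\theta_{y_0}(x)$, whereas Lemma \ref{Lem1.weight} (by the radial symmetry $\theta_{x_0}(x)=\theta_x(x_0)$) supplies only the matching \emph{upper} bound $K(x,y_0)\le C\,\theta_{y_0}(x)$. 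So the cited lemma alone does not close the argument, and your direct derivation of the lower bound --- restricting the $x_0$-integration to $B^1_{y_0}$, using $\theta_{y_0}(x_0)\ge\tfrac12$ there and $1+(1+|x-y_0|)^3\le C_0(1+|x-y_0|^3)$ to get $\theta_{x_0}(x)\ge C_0^{-1}\theta_{y_0}(x)$ --- is exactly the missing elementary ingredient; all steps check out and the constants are uniform in $x$, $y_0$ by translation invariance. Net effect: same route as the paper, but your version actually proves the estimate rather than citing a lemma that points in the wrong direction, and as a bonus it establishes the two-sided comparability $c\,\theta_{y_0}\le K(\cdot,y_0)\le C\,\theta_{y_0}$.
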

The proof of this corollary can also be found in \cite{Zel-JMFM}.

We conclude this section by introducing some weights and norms depending on a big parameter $R$ which will be crucial for what follows. First, we introduce the following equivalent norm in the space $W^{l,p}_b(\R^2)$:
\begin{equation}
\|u\|_{W^{l,p}_{b,R}}:=\sup_{x_0\in\R^2}\|u\|_{W^{l,p}(B^R_{x_0})}.
\end{equation}
Then, according to \eqref{1.ul-w},
\begin{equation}\label{1.ulR}
\|u\|_{W^{l,p}_b}\le\|u\|_{W^{l,p}_{b,R}}\le CR^{2/p}\|u\|_{W^{l,p}_b},
\end{equation}
where the constant $C$ is independent of $R\ge1$. We also introduce the scaled weight function
\begin{equation}\label{ttheta}
\theta_{R,x_0}(x):=\frac1{R^3+|x-x_0|^3}=R^{-3}\theta_{x_0/R}(x/R).
\end{equation}
Then, the scaled analogue of \eqref{1.ul-w} reads
\begin{equation}\label{1.ul-wR}
\|u\|_{W^{l,p}(B^{\kappa R}_{x_0})}^p\le CR^{-2}\int_{y\in B^{\kappa R}_{x_0}}\|u\|_{W^{l,p}(B^R_y)}^p\,dy\le C_\kappa R\int_{y\in\R^2}\theta_{R,x_0}(y)\|u\|^p_{W^{l,p}(B^R_{y})}\,dy,
\end{equation}
where the constants $C$ and $C_\kappa$ are independent of $R$ and the scaled analogue of \eqref{1.thetakey}
\begin{equation}\label{1.R-thetakey}
\int_{x\in\R^2}\theta_{R,x_0}(x)\theta_{R,y_0}(x)\,dx\le CR^{-1}\theta_{R,x_0}(y_0),
\end{equation}
where $C$ is independent of $x_0,y_0\in\R^2$ and $R>0$. Moreover,  multiplying  inequality \eqref{1.ul-wR} by $\theta_{R,y_0}(x_0)$, integrating over $x_0$ and using \eqref{1.R-thetakey}, we see that
\begin{equation}\label{1.eqtheta}
\int_{x\in\R^2}\theta_{R,x_0}(x)\|u\|^p_{W^{l,p}(B^{\kappa R}_x)}\,dx\le C_\kappa \int_{x\in\R^2}\theta_{R,x_0}(x)\|u\|^p_{W^{l,p}(B^{R}_x)}\,dx,
\end{equation}
where $C_\kappa$ is independent of $R$. We also note that, analogously to \eqref{1.w-ul} and using \eqref{1.ulR},
\begin{equation}\label{ulR}
\int_{x\in\R^2}\theta_{R,x_0}(x)\|u\|^2_{W^{l,2}(B^R_{x})}\,dx\le CR^{-1}\|u\|^2_{W^{l,2}_{b,R}}\le C_1R\|u\|^2_{W^{l,2}_b},
\end{equation}
where the constants $C$ and $C_1$ are independent of $R\gg1$.

\section{Preliminaries II: Estimating the pressure} \label{s2}
In this section, we introduce the key estimates which allow us to work with the pressure term $\nabla p$ in the uniformly local spaces. Note that the Helmholtz decomposition does not work for the general vector fields belonging to $L^2_b(\R^2)$, so the standard (for the bounded domains) approach
does not work at least directly and we need to proceed in a bit more accurate way.
\par
As usual, we assume that \eqref{1} is satisfied in the sense of distributions. Then, taking  the divergence from both sides of \eqref{1} and assuming that the external forces $g$ are divergence free:
\begin{equation}\label{gdiv}
\divv g=0,
\end{equation}
we have
\begin{equation}\label{0.5}
-\Dx p=\divv((u,\Nx)u)=\sum_{i,j=1}^2\partial_{x_i}\partial_{x_j}(u_iu_j).
\end{equation}
Thus, formally, $p$ can be expressed through $u$ by the following singular integral operator:
\begin{equation}\label{0.6}
p(y):=\int_{\R^2}\sum_{ij}K_{ij}(x-y)u_i(x)u_j(x)\,dx,\ \ K_{ij}(x):=\frac1{2\pi}\frac{|x|^2\delta_{ij}-2x_ix_j}{|x|^4}
\end{equation}
which we present in the form
\begin{equation}\label{0.conv}
p=\Bbb K w:=K*w,\ \ w:=u\otimes u,\ \ K*w=\sum_{ij} K_{ij} * w_{ij}.
\end{equation}
It is well-known that the convolution operator $\Bbb K$ is well-defined as a bounded  linear operator from $w\in [L^{q}(\R^2)]^4$ to $p\in L^q(\R^2)$,
$1<q<\infty$, but it is not true neither for $q=\infty$ nor for the uniformly local space $L^q_b(\R^2)$. However, as the following lemma shows, the {\it gradient} of $p$ (which is sufficient in order to define a solution of \eqref{1}) is
well-defined in uniformly local spaces and has natural regularity properties.

\begin{lemma}\label{Lem0.1} The operator $w\to\Nx p$, where $p$ is defined via \eqref{0.conv} can be extended by continuity (in $L^q_{loc}$) in a unique way to the bounded operator from $[W^{s,q}_b(\R^2)]^4$ to $[W^{s-1,q}_b(\R^2)]^2$, $1<q<\infty$ and $s\in\R$.
\end{lemma}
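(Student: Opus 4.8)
The plan is to realize $w\mapsto\Nx p$ as a convolution $\Nx p=(\Nx K)*w$ and to split its kernel into a singular core near the origin and a smooth, integrable tail. Since each $K_{ij}(x)$ in \eqref{0.6} is homogeneous of degree $-2$ (a Calderón--Zygmund kernel with zero mean on circles), its derivative $\Nx K$ is homogeneous of degree $-3$, so $|\Nx K(x)|\le C|x|^{-3}$. The crucial observation is that this rate is exactly the decay of the weight $\theta_{x_0}$ from \eqref{1.theta}, and, in contrast to $K$ itself, it is integrable at infinity in $\R^2$ while being genuinely non-integrable at the origin. This dichotomy dictates the whole argument, and it is why differentiating $p$ rescues the estimate: the tail of $K$ is only $O(|x|^{-2})$, which is not integrable in the plane, so $\Bbb K$ does not act on uniformly local spaces, whereas $\Nx\Bbb K$ does.

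Fix a smooth cut-off $\chi$ with $\chi\equiv1$ on $B^1_0$ and $\supp\chi\subset B^2_0$, and write $K=K\chi+K(1-\chi)=:K_{\rm loc}+K_\infty$, so $p=p_{\rm loc}+p_\infty$ with $p_{\rm loc}=K_{\rm loc}*w$ and $p_\infty=K_\infty*w$. For the local part I would use that $K_{\rm loc}$ is a compactly supported Calderón--Zygmund kernel, hence $w\mapsto p_{\rm loc}$ is bounded on $L^q(\R^2)$; by the compactness of its support this localizes, since only $w|_{B^3_{x_0}}$ enters $\|p_{\rm loc}\|_{L^q(B^1_{x_0})}$, giving $\|p_{\rm loc}\|_{L^q(B^1_{x_0})}\le C\|w\|_{L^q(B^3_{x_0})}\le C\|w\|_{L^q_b}$ and therefore $\|p_{\rm loc}\|_{L^q_b}\le C\|w\|_{L^q_b}$, so that $\|\Nx p_{\rm loc}\|_{W^{-1,q}_b}\le C\|w\|_{L^q_b}$. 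Here the derivative must be kept \emph{outside} the singular operator: the core is only $L^q$-bounded and not regularizing, which is precisely why the target carries one derivative less and why this loss cannot be improved.

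For the tail the kernel $\Nx K_\infty=(\Nx K)(1-\chi)+K\,\Nx(1-\chi)$ is smooth and $O(|x|^{-3})$ at infinity, hence lies in $L^1(\R^2)$, and $\Nx p_\infty=(\Nx K_\infty)*w$. I would estimate it directly in the uniformly local norm by Minkowski's integral inequality,
\[
\|\Nx p_\infty\|_{L^q(B^1_{x_0})}\le\int_{\R^2}|\Nx K_\infty(z)|\,\|w\|_{L^q(B^1_{x_0-z})}\,dz\le\|\Nx K_\infty\|_{L^1}\,\|w\|_{L^q_b},
\]
so $\Nx p_\infty\in L^q_b\hookrightarrow W^{-1,q}_b$. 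The same bound follows, more in the spirit of Section \ref{s1}, from $|\Nx K_\infty(z)|\le C\theta_0(z)$, Hölder's inequality and the convolution estimate \eqref{1.thetakey} underlying Corollary \ref{Cor1.conv} --- which is the very reason the cubic weight was introduced. Combining the two parts yields $\|\Nx p\|_{W^{-1,q}_b}\le C\|w\|_{L^q_b}$, i.e.\ the case $s=0$; deriving this first for smooth compactly supported $w$, where all convolutions are classical, then gives the extension by continuity and the asserted uniqueness.

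The general exponent $s\in\R$ follows by soft arguments. As $\Bbb K$ is a convolution operator it commutes with constant-coefficient derivatives, so for integer $s\ge1$ one applies the case $s=0$ to $\partial^\gamma w$ with $|\gamma|\le s-1$; negative integer $s$ follows by duality, since the formal adjoint of $w\mapsto\Nx p$ has a kernel of the same homogeneity (odd, degree $-3$) and is treated identically; and the remaining real $s$ are recovered by interpolation, consistently with the definition of $W^{s,q}_b$ for fractional and negative order recalled after Definition \ref{Def0.5.weights}. The main obstacle throughout is that the global bound $\Bbb K:L^q\to L^q$ does not survive on uniformly local spaces; the entire estimate hinges on the upgraded tail rate $|x|^{-3}$ of $\Nx p$, and the delicate point is to isolate the non-integrable singular core near the origin and absorb it by local Calderón--Zygmund theory while keeping the gradient outside it, which is what forces, and simultaneously justifies, the loss of exactly one derivative.
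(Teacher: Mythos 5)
Your proof is correct and follows essentially the same route as the paper's (taken from \cite{Zel-JMFM}): split the kernel by a cut-off into a compactly supported Calder\'on--Zygmund core, handled by local singular-integral bounds with the gradient kept outside (hence the loss of one derivative and the target $W^{s-1,q}_b$), plus a tail whose gradient decays like $|x|^{-3}$ and is therefore an integrable convolution controlled by the $\theta_{x_0}$-weight estimates of Corollary \ref{Cor1.conv}. Your additional remarks on general $s\in\R$ via commutation with derivatives, duality and interpolation are a standard supplement consistent with the paper's conventions.
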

For the proof of this lemma see \cite{Zel-JMFM}.
We will denote the operator obtained in the lemma by $\Nx P$ and the corresponding term in the Euler equation will be denoted by $\Nx P(u\otimes u)$. Then, in particular
\begin{equation}\label{0.9}
\Nx P(u\otimes u): [W^{1,2q}_b(\R^2)]^2\to [L^{q}_b(\R^2)]^2,\ 1<q<\infty.
\end{equation}
Here the operator $\Nx P(u\otimes u)$ is considered as a nonlinear (quadratic) operator $u\to\Nx P(u\otimes u)$. \par
We are now ready to give the definition of a weak solution of problem \eqref{1}.
\begin{definition}\label{Def0.1} Let the external forces $g\in\Cal H_b$, where the space $\Cal H_b$ is defined as follows:
\begin{equation}\label{2.space}
\Cal H_b:=\{u\in[L^2_b(\R^2)]^2,\ \divv u=0,\ \rott u\in L^\infty(\R^2)\}.
\end{equation}
Here and below $\rott u:=\partial_{x_2}u_1-\partial_{x_1}u_2$ and the norm in this space is given by \eqref{0.h-norm}.
\par
 A  vector field $u(t,x)$ is a weak solution of the damped Euler problem \eqref{1} if
\begin{equation}\label{2.defsol}
u\in L^\infty(\R_+,\Cal H_b)
\end{equation}
and the equation is satisfied in the sense of distributions with $\Nx p=\Nx P(u\otimes u)$ defined in Lemma \ref{Lem0.1}. Note that, according to the interpolation and embedding theorems,
\begin{equation}\label{2.sm}
u\in L^\infty(\R_+,W^{1,q}_b(\R^2))\subset L^\infty(\R_+\times\R^2),\ \ 2<q<\infty,
\end{equation}
see e.g., \cite{L02}. Thus, $u\otimes u\in  L^\infty(\R_+,W^{1,q}_b(\R^2))$, so, due to the previous lemma, the pressure term $\Nx p\in L^\infty([0,T],L^q_b(\R^2))$ and the equation \eqref{1} can be understood as equality in this space. Moreover, from equation \eqref{1}, we then conclude that $\Dt u\in L^\infty(\R_+,L^q_b(\R^2))$ and, therefore, $u\in C([0,T],L^q_b(\Omega))$. Thus, the initial data is also well-defined.
\end{definition}
\begin{remark} We emphasize once more that only the gradient of pressure $\nabla p$ is well-defined as an element of $L^\infty([0,T],L^q_b)$, but the pressure itself may be unbounded as $|x|\to\infty$. To be more precise, the operator $\nabla P$ defined above satisfies
\begin{equation}\label{1.press}
-\divv \nabla P(w)=\sum_{ij}\partial_{x_i}\partial_{x_j} w_{ij},\ \ \rot \nabla P(w)=0
\end{equation}
in the sense of distributions. These relations can be justified by approximating $w$ by finite
functions and passing to the limit analogously to the proof of Lemma \ref{Lem0.1} given in \cite{Zel-JMFM}. Therefore, there is a function $p\in L^\infty([0,T],W^{1,q}_{loc}(\R^2))$ such that
$$
\nabla p=\nabla P(w),
$$
see \cite{Tem77}, but this function may grow as $|x|\to\infty$ (in a fact, one can only guarantee that $p\in BMO(\R^2)$, but the functions with bounded mean oscillation may grow as $|x|\to\infty$, say, as a polynomial of $\log|x|$, see \cite{L02}). Thus, in general $p\notin L^\infty(\R^2)$.
\par
 Note also that the choice of $\nabla p=\nabla P(u\otimes u)$ is not unique. However, if $p_1$ and $p_2$ both satisfy \eqref{0.5} (for the same velocity field $u$), then the difference $p_1-p_2$ solves $\Delta(p_1-p_2)=0$ (in the sense of distributions) and, consequently is a {\it harmonic} function. Moreover, every harmonic function with bounded gradient is linear, so, if we want the velocity field $u$ to be in the proper uniformly local space, the most general choice of the pressure is
\begin{equation}
\nabla p=\nabla P(u\otimes u)+\vec C(t),
\end{equation}
where the constant vector $\vec C(t)$ depends only on time (and is independent of $x$) and $\nabla P$ is defined in Lemma \ref{Lem0.1}. In the present paper, we consider only the choice $\vec C(t)\equiv0$. In a fact, the vector $\vec C(t)$ should be treated as one more external data and can be chosen arbitrarily, but this does not lead to more general theory since everything can be reduced to the case of $\vec C\equiv 0$ by replacing the external force $g$ by $g-\vec C(t)$.
\end{remark}

We conclude this preliminary section by reminding the key estimate which allows us handle the pressure term in weighted energy estimates, see \cite{Zel-JMFM} for more details.
To this end, we introduce for every $x_0\in\R^2$ and $R>1$ the
cut-off function $\varphi_{R,x_0}$ which satisfies
\begin{equation}\label{0.11}
\varphi_{R,x_0}(x)\equiv1, \text{ for } x\in B^R_{x_0},\ \ \ \varphi_{R,x_0}(x)\equiv 0, \text{ for } x\notin B^{2R}_{x_0},
\end{equation}
and
\begin{equation}\label{0.12}
|\Nx\varphi_{R,x_0}(x)|\le CR^{-1}\varphi^{1/2}_{R,x_0}(x),
\end{equation}
where $C$ is independent of $R$ (obviously such family of cut-off functions exist). Then, the following result holds.

\begin{lemma}\label{Lem0.2} Let the exponents $1<p,q<\infty$, $\frac1p+\frac1q=1$, $w\in [L^p_b(\R^2)]^4$ and $v\in [W^{1,q}(\R^2)]^2$ be
divergence free. Then the following estimate holds:
\begin{equation}\label{0.15}
|(\Nx P(w),\varphi_{R,x_0}v)|\le C\int_{\R^2}\theta_{R,x_0}(x)\|w\|_{L^p(B^R_{x})}\,dx\cdot\|\varphi_{R,x_0}^{1/2}v\|_{L^q},
\end{equation}
where $C$ is independent of $R$ and $x_0$ and $\theta_{R,x_0}(x)$ is defined by \eqref{ttheta}.
\end{lemma}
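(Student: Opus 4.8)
The plan is to reduce \eqref{0.15} to a purely local estimate on the pressure by integrating by parts, exploiting two structural facts: that $\Nx P(w)$ is curl-free, so it is a gradient $\Nx P(w)=\Nx p$ with $p\in W^{1,p}_{loc}(\R^2)$ (cf. \eqref{1.press}), and that $v$ is divergence free. First I would write, using $\divv v=0$ and the fact that $\int_{\R^2}\Nx\varphi_{R,x_0}\cdot v\,dx=\int_{\R^2}\divv(\varphi_{R,x_0}v)\,dx=0$,
\[
(\Nx P(w),\varphi_{R,x_0}v)=-\int_{\R^2}p\,(\Nx\varphi_{R,x_0}\cdot v)\,dx=-\int_{\R^2}(p-c)\,(\Nx\varphi_{R,x_0}\cdot v)\,dx
\]
for an \emph{arbitrary} constant $c$. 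This freedom to subtract a constant is exactly what makes the (possibly unbounded) pressure usable, since only differences of $p$ are well defined. Combining \eqref{0.12}, $\supp\Nx\varphi_{R,x_0}\subseteq B^{2R}_{x_0}$, and H\"older's inequality gives
\[
|(\Nx P(w),\varphi_{R,x_0}v)|\le CR^{-1}\,\|p-c\|_{L^p(B^{2R}_{x_0})}\,\|\varphi_{R,x_0}^{1/2}v\|_{L^q},
\]
so it remains to produce a constant $c$ with $\|p-c\|_{L^p(B^{2R}_{x_0})}\le CR\int_{\R^2}\theta_{R,x_0}(x)\|w\|_{L^p(B^R_x)}\,dx$.

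To prove this local pressure estimate I would split $w=w_1+w_2$ with $w_1:=w\,\chi_{B^{8R}_{x_0}}$, set $p=p_1+p_2$ accordingly (working with pressure differences, as $K*w_2$ need not converge for $w_2\in L^p_b$), and take $c:=\langle p_2\rangle_{B^{2R}_{x_0}}$. For the far part I would estimate, for $y\in B^{2R}_{x_0}$, the difference $p_2(y)-c$ by the mean value theorem applied to the kernel: since $|\Nx K(z)|\le C|z|^{-3}$ and $|x-x_0|\ge 8R$ on $\supp w_2$,
\[
|p_2(y)-c|\le CR\int_{|x-x_0|\ge 8R}\frac{|w(x)|}{|x-x_0|^3}\,dx\le CR\int_{\R^2}\theta_{R,x_0}(x)|w(x)|\,dx,
\]
using $|x-x_0|^{-3}\le C\theta_{R,x_0}(x)$ on the far region. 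The passage from $\int\theta_{R,x_0}|w|\,dx$ to $\int\theta_{R,x_0}\|w\|_{L^p(B^R_x)}\,dx$ is the \emph{easy} direction: by the slow variation of $\theta_{R,x_0}$ one has $\theta_{R,x_0}(x)\le CR^{-2}\int_{B^R_x}\theta_{R,x_0}(y)\,dy$, so Fubini and H\"older on each $R$-ball give $\int\theta_{R,x_0}|w|\,dx\le CR^{2/q-2}\int\theta_{R,x_0}\|w\|_{L^p(B^R_x)}\,dx$. Since $\|p_2-c\|_{L^p(B^{2R}_{x_0})}\le CR^{2/p}\sup_{B^{2R}_{x_0}}|p_2-c|$, the identity $2/p+2/q=2$ makes all powers of $R$ combine to $R^1$ and yields $\|p_2-c\|_{L^p(B^{2R}_{x_0})}\le CR\int_{\R^2}\theta_{R,x_0}\|w\|_{L^p(B^R_x)}\,dx$.

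\emph{The main obstacle is the near part.} Here the Calder\'on--Zygmund boundedness of $K*$ only gives $\|p_1\|_{L^p(\R^2)}\le C\|w_1\|_{L^p}=C\|w\|_{L^p(B^{8R}_{x_0})}$, i.e. a single \emph{global} $L^p$-norm, whereas the right-hand side of \eqref{0.15} involves the weighted integral of the \emph{first} power of the local norms $\|w\|_{L^p(B^R_x)}$, which (unlike the $p$-th power estimate \eqref{1.ul-wR} already available) is strictly smaller. Since $\theta_{R,x_0}\sim R^{-3}$ on $B^{10R}_{x_0}$, what is genuinely needed is the sharp de-concentration inequality
\[
\|w\|_{L^p(B^{8R}_{x_0})}\le CR^{-2}\int_{B^{10R}_{x_0}}\|w\|_{L^p(B^R_x)}\,dx,
\]
which cannot be obtained by comparing $\|w\|_{L^p(B^R_x)}$ with $\|w\|_{L^p(B^R_{x_i})}$ for nearby centers, as the local norm may vary sharply in $x$. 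I would establish it by covering $B^{8R}_{x_0}$ with boundedly many balls $B^R_{x_i}$ and proving, for each $i$, that $\int_{\R^2}\|w\|_{L^p(B^R_x)}\,dx\ge cR^2\|w\|_{L^p(B^R_{x_i})}$: setting $m(x):=\int_{B^R_x\cap B^R_{x_i}}|w|^p\,dy$ one has $0\le m\le\mathcal M:=\|w\|_{L^p(B^R_{x_i})}^p$, $\int m\,dx=\pi R^2\mathcal M$, and $\supp m$ of measure $\le 4\pi R^2$, so the \emph{concavity} of $t\mapsto t^{1/p}$ (whose integral functional is minimized at bang-bang configurations) forces $\int\|w\|_{L^p(B^R_x)}\,dx\ge\int m^{1/p}\,dx\ge\pi R^2\mathcal M^{1/p}$. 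This is the ``new version of the interpolation inequality'' that replaces \eqref{1.ul-wR}. Combining the near and far estimates gives $\|p-c\|_{L^p(B^{2R}_{x_0})}\le CR\int_{\R^2}\theta_{R,x_0}\|w\|_{L^p(B^R_x)}\,dx$, and then the first paragraph yields \eqref{0.15}. All computations would be carried out first for $w,v\in C_0^\infty$ with $v$ divergence free, and extended by the continuity of $\Nx P$ from Lemma \ref{Lem0.1} and a density argument.
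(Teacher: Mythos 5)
Your argument is correct and follows essentially the same route as the paper's proof: integrate by parts against $\Nx\varphi_{R,x_0}\cdot v$ (using $\divv v=0$ and the freedom to subtract a constant from the pressure), split the pressure into a near part handled by Calder\'on--Zygmund and a far part handled by the gradient decay $|\Nx K(z)|\le C|z|^{-3}$ of the kernel. The differences are cosmetic: the paper truncates the \emph{kernel} at scale $R$ rather than truncating $w$, and subtracts the value of the far part at $x_0$ rather than its mean over $B^{2R}_{x_0}$. The one place where you add something real is the first-power inequality $\|w\|_{L^p(B^{8R}_{x_0})}\le CR^{-2}\int_{B^{10R}_{x_0}}\|w\|_{L^p(B^R_x)}\,dx$: the paper passes over the analogous step by citing \eqref{1.ul-wR}, which is the $p$-th power version and does not yield the first-power version by Jensen (the inequality runs the wrong way), so your insistence on proving it is justified, and your chord bound $t^{1/p}\ge \mathcal M^{1/p-1}t$ on $[0,\mathcal M]$ does the job. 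Note, however, that it is less of an obstacle than you suggest: covering $B^{8R}_{x_0}$ by $O(1)$ balls $B^{R/2}_{y_j}$ and using the monotonicity $\|w\|_{L^p(B^{R/2}_{y_j})}\le\|w\|_{L^p(B^R_x)}$ for every $x\in B^{R/2}_{y_j}$, then averaging in $x$, gives it directly; and it is not the ``new interpolation inequality'' the paper refers to, which is the vorticity estimate \eqref{A.1} of the Appendix used in Section \ref{s3}.
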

For the proof of this lemma, see \cite{Zel-JMFM}.

\section{Dissipative estimates for the velocity field}\label{s3}
 Our aim here is to prove the following dissipative estimate for the solutions of \thetag{1} in the phase space $\Cal H_b$.
 We start with recalling the $L^\infty$-estimate for the vorticity $\omega=\rott u$ which satisfies the following scalar transport equation:
 \begin{equation}\label{3.vort}
 \Dt \omega+\alpha\omega+(u,\Nx)\omega=\rott g,\ \ \omega\big|_{t=0}=\omega_0:=\rott u_0.
 \end{equation}
 \begin{lemma}\label{Lem3.vort} Let $u$ be a weak solution of the Euler problem \thetag{1}. Then, the vorticity $\omega$ satisfies the following estimate:
\begin{equation}\label{9}
\|\omega(t)\|_{L^\infty}\le C\|w(0)\|_{L^\infty}e^{-\alpha t}+\frac{\|\rott g\|_{L^\infty}}\alpha,
\end{equation}
 where the constant $C$ is independent of $t$ and $u$.
 \end{lemma}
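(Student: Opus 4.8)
The plan is to exploit the transport structure of \eqref{3.vort}. Rewriting the equation as
\[
\Dt\omega+(u,\Nx)\omega=-\alpha\omega+\rott g,
\]
its left-hand side is exactly the material derivative of the vorticity along the flow generated by the velocity field $u$. Since $u$ is divergence free, this flow is measure preserving, and along every characteristic the scalar $\omega$ obeys a linear ODE with constant damping rate $\alpha$ and source $\rott g$. Concretely, if $X(s)=X(s;x_0)$ denotes the characteristic with $\dot X=u(s,X)$ and $X(0)=x_0$, then $\frac{d}{ds}\big(e^{\alpha s}\omega(s,X(s))\big)=e^{\alpha s}\rott g(X(s))$.

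Integrating this identity from $0$ to $t$ (Duhamel's formula) gives
\[
\omega(t,X(t))=e^{-\alpha t}\omega_0(x_0)+\int_0^t e^{-\alpha(t-s)}\rott g(X(s))\,ds,
\]
so that
\[
|\omega(t,X(t))|\le e^{-\alpha t}\|\omega_0\|_{L^\infty}+\|\rott g\|_{L^\infty}\int_0^t e^{-\alpha(t-s)}\,ds\le e^{-\alpha t}\|\omega_0\|_{L^\infty}+\frac{\|\rott g\|_{L^\infty}}{\alpha}.
\]
Because the time-$t$ flow map $x_0\mapsto X(t;x_0)$ is a measure-preserving diffeomorphism of $\R^2$, every point of $\R^2$ is the image of some characteristic, and taking the supremum over $x_0$ yields \eqref{9} (indeed with $C=1$).

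The step requiring care — and the main obstacle — is the rigorous justification of this computation for a merely \emph{weak} solution, for which $u$ need not be Lipschitz, so classical characteristics are not immediately available. The way around this is that in two dimensions the bound $\omega\in L^\infty$ together with the Biot--Savart representation of $u$ forces $u$ to be log-Lipschitz (this is precisely the regularity underlying the Yudovich uniqueness argument carried out later in Section \ref{s4}), and an Osgood-type modulus of continuity guarantees existence and uniqueness of the volume-preserving flow $X(\cdot;x_0)$. Alternatively, one may avoid characteristics altogether by adding a vanishing artificial viscosity $-\nu\Dx\omega$ to \eqref{3.vort}: the classical parabolic maximum principle then gives, at an extremum of the regularized vorticity $\omega^\nu$, the differential inequality $\frac{d}{dt}\|\omega^\nu\|_{L^\infty}+\alpha\|\omega^\nu\|_{L^\infty}\le\|\rott g\|_{L^\infty}$, whose integration reproduces the same bound; here one must additionally control the behaviour at spatial infinity (via the decay of $\omega$ or a penalization term) to ensure the extremum is effectively attained before letting $\nu\to0$.
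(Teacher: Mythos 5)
Your argument is correct, and in fact more detailed than what the paper provides: the paper simply declares \eqref{9} to be an immediate consequence of the maximum principle for the transport equation \eqref{3.vort}, justified by approximating $u$ by smooth solutions via the vanishing viscosity method (relying on the uniqueness proved in Section \ref{s4} to identify the limit). That is precisely your second, ``alternative'' route. Your primary route --- integrating the damped linear ODE for $\omega$ along the Lagrangian characteristics of the divergence-free field $u$ and using that the flow map is a measure-preserving bijection of $\R^2$ --- is a genuinely different (Lagrangian rather than Eulerian) realization of the same principle; it has the advantage of producing \eqref{9} with $C=1$ by an explicit Duhamel formula, at the cost of having to construct the flow for a non-Lipschitz velocity. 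One small caveat on that construction: for $u\in\Cal H_b$ the vorticity does not decay at infinity, so the global Biot--Savart representation you invoke is not literally available; the log-Lipschitz modulus of continuity of $u$ should instead be extracted from the uniformly local estimate $\|u\|_{W^{1,p}_b}\le Cp\|u\|_{\Cal H_b}$ of Lemma \ref{Lem4.Yud}, after which the Osgood argument gives the unique volume-preserving flow exactly as you describe. With that substitution both of your routes are sound, and the second one coincides with the paper's intended proof.
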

Indeed, the desired estimate \eqref{9} is an immediate corollary of the maximum principle applied to the transport equation \eqref{3.vort}. The validity of the maximum principle can be easily justified using the fact that the weak solution of the damped Euler equation is unique (which will be proved in the next section) and approximating the solution $u$ by the smooth ones by the vanishing viscosity method. Thus, we only need to estimate the $L^2_b$-norm of $u$. To do that, we will extend the approach developed in \cite{Zel-JMFM} to the case of Euler equations.

\begin{theorem}\label{Th3.main} Let the above assumptions hold. Then the Euler equation \eqref{1} possesses at least one weak solution which satisfies the following dissipative estimate:
\begin{equation}\label{8}
\|u(t)\|_{\Cal H_b}\le Q(\|u_0\|_{\Cal H_b})e^{-\beta t}+Q(\|g\|_{\Cal H_b}),\ \
\end{equation}
where $\beta>0$ and $Q$ is a monotone function.
\end{theorem}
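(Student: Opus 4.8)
The plan is to prove the dissipative estimate \eqref{8} via a weighted energy estimate, following the strategy of \cite{Zel-JMFM} but adapted to zero viscosity. Since the $L^\infty$-bound on the vorticity $\omega=\rott u$ is already in hand from Lemma \ref{Lem3.vort}, the sole remaining task is to control $\|u(t)\|_{L^2_b}$. I would work with the localized/weighted energy functional
\begin{equation*}
E_{R,x_0}(t):=\int_{\R^2}\theta_{R,x_0}(x)|u(t,x)|^2\,dx
\end{equation*}
(equivalently, test \eqref{1} against $\varphi_{R,x_0}u$ and integrate by parts), where $\theta_{R,x_0}$ and $\varphi_{R,x_0}$ are the scaled weight and cut-off from \eqref{ttheta}--\eqref{0.12}. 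Multiplying the momentum equation by this weighted $u$ and integrating, the damping term $\alpha u$ yields the crucial coercive contribution $\alpha E_{R,x_0}$, which is the source of dissipativity and has no analogue in the inviscid undamped case.

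\emph{The key steps, in order.} First I would derive the formal energy identity: $\frac12\frac{d}{dt}E_{R,x_0}+\alpha E_{R,x_0}=(\text{transport commutator})+(\text{pressure term})+(g,\theta_{R,x_0}u)$. Second, the transport term $((u,\Nx)u,\theta_{R,x_0}u)$ integrates by parts to $\frac12\int (u,\Nx\theta_{R,x_0})|u|^2$, which by the weight estimate \eqref{0.12}-type gradient bound $|\Nx\theta_{R,x_0}|\le CR^{-1}\theta_{R,x_0}^{\,?}$ produces a term of order $R^{-1}\int\theta_{R,x_0}|u|^3$. Third, the pressure term is handled precisely by Lemma \ref{Lem0.2}, which bounds $|(\Nx P(u\otimes u),\varphi_{R,x_0}u)|$ by $C\int\theta_{R,x_0}\|u\otimes u\|_{L^p(B^R)}\,dx\cdot\|\varphi_{R,x_0}^{1/2}u\|_{L^q}$; with $w=u\otimes u$ this is again cubic in $u$. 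Fourth, and this is the decisive analytic input, I would invoke the \emph{new interpolation inequality} proved in the Appendix to estimate the cubic nuisance terms $\int\theta_{R,x_0}|u|^3$ in terms of $E_{R,x_0}=\int\theta_{R,x_0}|u|^2$ times a positive power of $\|\omega\|_{L^\infty}$ (and lower powers of $R$), so that all the cubic terms can be absorbed into $\alpha E_{R,x_0}$ plus controllable remainders. Schematically this should yield a differential inequality of the form
\begin{equation*}
\frac{d}{dt}E_{R,x_0}+\beta E_{R,x_0}\le C\,Q(\|\omega\|_{L^\infty})+(\text{terms}\to0\text{ as }R\to\infty),
\end{equation*}
after which Gronwall gives exponential decay to an equilibrium level, and passing $R\to\infty$ combined with the equivalence \eqref{ulR} between the weighted norm and the uniformly local $L^2_b$-norm converts this into \eqref{8} for $\|u\|_{L^2_b}$. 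Adding the already-established $L^\infty$-bound on $\omega$ from \eqref{9} completes the bound on the full $\Cal H_b$-norm. Existence of a weak solution is obtained in parallel by a vanishing-viscosity (Galerkin/regularization) approximation for which the same a priori estimate holds uniformly, followed by a compactness-and-limit argument.

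\emph{Where the difficulty lies.} The main obstacle is the cubic term: unlike the Navier-Stokes case, there is no viscous term $\nu\|\Nx u\|^2$ available to dominate the transport and pressure contributions, so one cannot simply use the standard interpolation $\|u\|_{L^3}\lesssim\|u\|_{L^2}^{1/2}\|\Nx u\|_{L^2}^{1/2}$. The whole estimate must instead close using only the damping $\alpha E$ and the $L^\infty$-control of vorticity as the substitute for regularity; this is exactly why the refined interpolation inequality of the Appendix is needed, and getting the powers of $R$ to be non-positive (so the remainder genuinely vanishes as $R\to\infty$) is the delicate bookkeeping that makes the argument work. A secondary subtlety is that the weighted energy identity is only formal for a weak solution, so one should justify it either through the regularized approximants or by the uniqueness-plus-vanishing-viscosity route already mentioned for the maximum principle.
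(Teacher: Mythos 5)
Your overall architecture is the same as the paper's: localized energy estimate with $\varphi_{R,x_0}$, the pressure lemma (Lemma \ref{Lem0.2}), the Appendix interpolation inequality \eqref{A.1} to handle the cubic terms, Gronwall, the vorticity maximum principle for the $L^\infty$ part, and vanishing viscosity for existence. However, there is a genuine gap in the step where you close the differential inequality. After applying \eqref{A.1-copy} and averaging against $\theta_{R,y_0}$, the cubic terms are \emph{not} bounded by $E$ times a quantity depending only on $\|\omega\|_{L^\infty}$ and negative powers of $R$: what one actually gets (estimate \eqref{21} of the paper) is a coefficient of the form
\begin{equation*}
CR^{-1/2}\bigl(\|\omega(t)\|_{L^\infty}+R^{-1}\|u(t)\|_{L^2_{b,R}}\bigr)
\end{equation*}
multiplying $Z_{R,x_0}(u)$, and this contains the unknown norm $\|u\|_{L^2_{b,R}}$ itself. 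You cannot absorb this into $\alpha E$ for a fixed $R$ and then ``pass $R\to\infty$'': the inequality is self-referential, and moreover the passage from the weighted quantity $Z_{R,x_0}$ back to $\|u\|_{L^2_b}$ costs a factor of $R$ (see \eqref{1.ul-wR} and \eqref{ulR}), so $R$ must stay finite. The missing idea is the bootstrap: one \emph{assumes} the smallness condition $KR^{-1/2}(R^{-1}\|u\|_{L^2_{b,R}}+\|\omega\|_{L^\infty})\le\beta$, runs Gronwall to get $Z_{R,x_0}\le CR(\|u_0\|^2_{L^2_b}+\|g\|^2_{L^2_b})$ and hence $R^{-1}\|u\|_{L^2_{b,R}}+\|\omega\|_{L^\infty}\le C_1(\|u_0\|_{\Cal H_b}+\|g\|_{\Cal H_b}+1)$, and then fixes $R$ self-consistently by \eqref{R} so that the assumed condition is actually satisfied (made rigorous by a continuation argument). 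This is exactly why the resulting bound is cubic in the data, $\|u(t)\|_{L^2_b}\le C(\|u_0\|_{\Cal H_b}+\|g\|_{\Cal H_b}+1)^3$, rather than the clean ``$Q(\|\omega\|_{L^\infty})$'' equilibrium level you predict.

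A second, related omission: with $R$ fixed in terms of $\|u_0\|_{\Cal H_b}$ as above, the Gronwall argument yields a bound that is \emph{uniform} in $t$ but not of the dissipative form \eqref{8} --- the contribution of $u_0$ does not decay, because $R$ itself was chosen large in terms of $u_0$. To obtain the genuinely dissipative estimate one must let the radius depend on time, $R=R(t)$ shrinking as in \eqref{RR} while the solution decays, and rerun the argument. Without these two ingredients (the self-consistent choice of $R$ and the time-dependent $R(t)$), the proof as you outline it does not close.
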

\begin{proof} For simplicity, we first derive the desired estimate \eqref{8} in the non-dissipative case $\beta=0$ and then indicate the changes to be made in order to verify the dissipation. The existence of a solution can be obtained after that in a standard way using e.g., the vanishing viscosity method which we leave to the reader.
 \par
 We will systematically use the weight functions
\begin{equation}\label{10}
\theta_{R,x_0}(x):=\frac1{R^3+|x-x_0|^3}
\end{equation}
and the family of cut-off functions $\varphi_{R,x_0}(x)$ which equal to one if $x\in B^R_{x_0}$ and zero outside of $B^{2R}_{x_0}$ such that
\begin{equation}\label{11}
|\Nx \varphi_{R,x_0}(x)|\le CR^{-1}\varphi_{R,x_0}(x)^{1/2}.
\end{equation}
introduced in Sections \ref{s1} and \ref{s2}.
\par
Then, multiplying equation \thetag{1} by $u\varphi_{R,x_0}$ (where $R$ is sufficiently large number and $x_0\in\R^2$) and following \cite{Zel-JMFM}, after the integration over $x$, estimation of the nonlinear term as follows:
$$
|((u,\Nx u),\varphi_{R,x_0}u)|=\frac12|(u.\Nx\varphi_{R,x_0},|u|^2)|\le CR^{-1}\|u\|_{L^3(B^{2R}_{x_0})}^3
$$
and standard transformations, we get
\begin{equation}\label{Z.13}
\frac d{dt}\|u(t)\|_{L^2_{\varphi_{R,x_0}}}^2+\alpha\|u\|^2_{L^2_{\varphi_{R,x_0}}}\le C\|g\|_{L^2_{\varphi_{R,x_0}}}^2+
 CR^{-1}\|u\|^3_{L^3(B^{2R}_{x_0})}+2|(\Nx P(u\otimes u),\varphi_{R,x_0}u)|.
\end{equation}
Here the constant $C$ is independent of $R$. To estimate the term containing pressure, we use Lemma \ref{Lem0.2} with $q=3$ and $p=3/2$. Then, due to
\eqref{0.15} together with the H\"older and Young inequalities,
\begin{multline}\label{0.huuge}
|(\Nx P(u\otimes u),\varphi_{R,x_0}u)|\le C\int_{\R^2}\theta_{R,x_0}(x)\|u\otimes u\|_{L^{3/2}(B^R_{x})}\,dx\cdot\|u\|_{L^3(B^{2R}_{x_0})}\le\\\le
C\int_{\R^2}\theta_{R,x_0}(x)\|u\|^2_{L^3(B^{R}_{x})}\,dx\cdot\|u\|_{L^3(B^{2R}_{x_0})}\le \\\le C\(\int_{\R^2}\theta_{R,x_0}(x)\,dx\)^{1/3}\(\int_{\R^2}\theta_{R,x_0}(x)\|u\|^3_{L^3(B^{R}_{x})}\,dx\)^{2/3}\cdot\|u\|_{L^3(B^{2R}_{x_0})}\le\\\le CR^{-1/3}\(\int_{\R^2}\theta_{R,x_0}(x)\|u\|^3_{L^3(B^{R}_{x})}\,dx\)^{2/3}\cdot\|u\|_{L^3(B^{2R}_{x_0})}\le\\\le C\int_{\R^2}\theta_{R,x_0}(x)\|u\|^3_{L^3(B^{R}_{x})}\,dx+
CR^{-1}\|u\|_{L^3(B^{2R}_{x_0})}^3,
\end{multline}
where all constants are independent of $R\gg1$. Thus, \eqref{Z.13} now reads
\begin{multline}\label{Z.14}
\frac d{dt}\|u(t)\|_{L^2_{\varphi_{R,x_0}}}^2+\alpha\|u\|^2_{L^2_{\varphi_{R,x_0}}}\le C\|g\|_{L^2_{\varphi_{R,x_0}}}^2+\\+
 CR^{-1}\|u\|^3_{L^3(B^{2R}_{x_0})}+C\int_{\R^2}\theta_{R,x_0}(x)\|u\|^3_{L^3(B^{R}_{x})}\,dx.
\end{multline}
We  introduce
\begin{equation}\label{13}
Z_{R,y_0}(u):=\int_{x_0\in\R^2}\theta_{R,y_0}(x_0)\|u\|^2_{L^2_{\varphi_{R,x_0}}}\,dx_0.
\end{equation}
Then, using \eqref{1.eqtheta}, we have
\begin{equation}\label{14}
C_2\int_{y\in\R^2}\theta_{R,y_0}(y)\|u\|^2_{L^2(B^R_y)}\,dy\le Z_{R,y_0}(u)\le C_1\int_{y\in\R^2}\theta_{R,y_0}(y)\|u\|^2_{L^2(B^R_y)}\,dy,
\end{equation}
where $C_i$ are independent of $R$. Multiplying now equation \eqref{Z.14} on $\theta_{R,y_0}(x_0)$, integrating over $x_0\in\R^2$ and using \eqref{1.R-thetakey}, we see that, for sufficiently large $R$,
\begin{equation}\label{12}
\frac d{dt}Z_{R,x_0}(u(t))+2\beta Z_{R,x_0}(u(t))\le C Z_{R,x_0}(g)+
 CR^{-1} \int_{x\in\R^2}\theta_{R,x_0}(x)\|u\|_{L^3(B^R_x)}^3\,dx,
\end{equation}
where the positive constants $C$ and $\beta$ are independent of $R$.
 \par
  Thus, we only need to estimate the integral in the RHS of \thetag{12} which, however, a bit more delicate than in \cite{Zel-JMFM} since now we do not have the control of the $H^1$-norm. We use the interpolation inequality proved in Appendix
\begin{equation}\label{A.1-copy}
\|u\|_{L^3(B_{R})}^3\le C\(\frac1R\|u\|^3_{L^2(B_{2R})}+\|u\|^{5/2}_{L^2(B_{2R})}\|\omega\|_{L^\infty(B_{2R})}^{1/2}\).
\end{equation}
which holds for every $R>0$ and every $u\in\Cal H_b$. Using this inequality, we have
\begin{multline}
 \int_{x\in\R^2}\theta_{R,x_0}(x)\|u\|_{L^3(B^R_x)}^3\,dx\le\\\le C\int_{x\in\R^2}\theta_{R,x_0}(x)\(\frac1R\|u\|^3_{L^2(B^{2R}_x)}+\|u\|^{5/2}_{L^2(B^{2R}_x)}\|\omega\|_{L^\infty(B^{2R}_x)}^{1/2}\)\,dx
\le\\\le C\int_{x\in\R^2}\theta_{R,x_0}(x)\|u\|^2_{L^2(B^{2R}_x)}\(\frac1R\|u\|_{L^2(B^{2R}_x)}+
\|u\|^{1/2}_{L^2(B^{2R}_x)}\|\omega\|_{L^\infty(B^{2R}_x)}^{1/2}\)\,dx\le\\\le C\(R^{-1}\|u\|_{L^2_{b,2R}}+\|u\|^{1/2}_{L^2_{b,2R}}\|\omega\|_{L^\infty}^{1/2}\)
\int_{x\in\R^2}\theta_{R,x_0}(x)\|u\|_{L^2(B^{2R}_x)}^2\,dx\le\\\le CR^{1/2}(\|\omega\|_{L^\infty}+R^{-1}\|u\|_{L^2_{b,R}})Z_{R,x_0}(u).
\end{multline}

Inserting this estimate into \thetag{12}, we finally get
\begin{multline}\label{21}
\frac d{dt} Z_{R,x_0}(u(t))+\beta Z_{R,x_0}(u(t))+\\+\(\beta-CR^{-1/2}(\|\omega(t)\|_{L^\infty}+R^{-1}\|u(t)\|_{L^2_{b,R}})\)Z_{R,x_0}(u(t))\le CZ_{R,x_0}(g).
\end{multline}
That is the complete analogue of estimate (5.18) of \cite{Zel-JMFM},
 so arguing exactly as in the proof of estimate (5.23) there, we end up with the estimate
\begin{equation}\label{22}
\|u(t)\|_{L^2_b}\le C(\|u_0\|_{\Cal H_b}+\|g\|_{\Cal H_b}+1)^3.
\end{equation}
For the convenience of the reader, we give below a schematic derivation of \eqref{22} from the key estimate \eqref{21}. The details can be found in \cite{Zel-JMFM}. Indeed, under the additional assumption that the constant $R=R(u_0,g)$ satisfies
\begin{equation}\label{4.loop}
KR^{-1/2}(R^{-1}\|u\|_{L^2_{b,R}}+\|\omega\|_{L^\infty})\le \beta, \ \ t\ge0,
\end{equation}
the Gronwall estimate applied to \eqref{21} gives
\begin{equation}\label{4.Zbound}
Z_{R,x_0}(t)\le Z_{R,x_0}(u_0)e^{-\beta t}+CZ_{R,x_0}(g)\le CR(\|u_0\|_{L^2_b}^2+\|g\|_{L^2_b}^2)
\end{equation}
and, therefore, taking into the account \eqref{9} and \eqref{1.ul-wR}, we have the desired control
\begin{equation}\label{4.bound}
R^{-1}\|u(t)\|_{L^2_{b,R}}+\|w(t)\|_{L^\infty}\le C_1(\|u_0\|_{\Cal H_b}+\|g\|_{\Cal H_b}+1).
\end{equation}
Thus, to finish the proof of \eqref{22}, we only need to fix the parameter $R$ in such way that our extra assumption \eqref{4.loop} is satisfied. Inserting the obtained estimate \eqref{4.bound} into the left-hand side of \eqref{4.loop}, we see that it will be formally satisfied if
\begin{equation}\label{R}
R^{-1/2}=\frac\beta{KC_1}(\|u_0\|_{L^2_b}+\|\rot u_0\|_{L^\infty}+1+\|g\|_{L^2_b}+\|\rot g\|_{L^\infty})^{-1}
\end{equation}
and this estimate together with \eqref{4.bound} gives  the desired estimate \thetag{22}. Of course, the above arguments are formal but they can be made rigorous exactly as in \cite{Zel-JMFM}.

Remind that estimate is still not dissipative in time. In order to obtain its dissipative analogue, we just need to take $R=R(t)$ depending on time and argue exactly as in Section 6 of \cite{Zel-JMFM} (see also \cite{PZ12} for the analogous estimate in the case of the Cahn-Hilliard equation in $\R^3$). Indeed, as shown there, if we replace \eqref{R} by
\begin{equation}\label{RR}
R(t)^{-1/2}=\frac\beta{KC_1}\(\|u_0\|_{\Cal H_b}e^{-\gamma t}+1+\|g\|_{\Cal H_b}\)^{-1},
\end{equation}
where $\gamma>0$ is small enough then, arguing exactly as above we obtain that
\begin{equation}\label{L2b-dis}
\|u(t)\|_{L^2_b}\le\|u\|_{L^2_{b,R}}\le C\|u_0\|_{\Cal H_b}^3e^{-\gamma t}+ C(1+\|g\|_{\Cal H_b})^3.
\end{equation}
Thus, the desired estimate \eqref{8} for the $L^2_b$-norm of the velocity field is obtained. Since the control of the $L^\infty$-norm of the vorticity has been already obtained in \eqref{9}, the theorem is proved.
\end{proof}
\begin{remark}\label{Rem3.NS} Remind that the analogue of the dissipative estimate \eqref{8} for the case of damped Navier-Stokes equation
\begin{equation}\label{NSa}
\Dt u+(u,\Nx)u+\alpha u+\Nx p=\nu\Dx u+g,\ \ \divv u=0
\end{equation}
has been previously obtained in \cite{Zel-JMFM}. However, the proof given there used essentially the viscous term $\nu\Dx u$ and the obtained estimate was not uniform with respect to $\nu\to0$. In contrast to this, based on the new version of the interpolation inequality, see \eqref{A.1-copy}, we have checked that the above estimate holds for the limit case $\nu=0$. Moreover, as not difficult to see, the dissipative estimate \eqref{8} is now {\it uniform} with respect to $\nu\to0$.
\end{remark}
\begin{remark}\label{Rem3.NS0} The method described above works in the case of classical Euler equations (which corresponds to $\alpha=0$) as well. However, in this case we cannot expect any dissipative estimates and the analogue of \eqref{8} will be growing in time:
\begin{equation}\label{Egrow}
\|u(t)\|_{\Cal H_b}\le C(1+\|g\|_{\Cal H_b}+\|u_0\|_{\Cal H_b})^3(t+1)^5.
\end{equation}
The proof of this estimate repeats word by word the one given in \cite{Zel-JMFM} for the case of damped Navier-Stokes equations (if we use the new version of the key interpolation inequality). In addition, the following estimate stated in \cite{Zel-JMFM} remains true:
\begin{equation}\label{R-dis}
\frac1{(t+1)^4}\|u(t)\|_{L^2_{b,(t+1)^4}}\le C(t+1),
\end{equation}
where the constant $C$ depends on $u_0$ and $g$, but is independent of $t$. As elementary examples with $g=const$, $u=t g$ show, in contrast to \eqref{Egrow},  estimate \eqref{R-dis} on the mean value of the energy with respect to the expanding balls of radii $R(t)=(t+1)^4$ is sharp.

 Moreover, in the important particular case $g=0$ this estimate can be essentially improved arguing exactly as in \cite{Zel-JMFM}:
\begin{equation}\label{R_0-dis}
\frac1{(t+1)^2}\|u(t)\|_{L^2_{b,(t+1)^2}}\le C,
\end{equation}
so the $L^2_b$-norm of the velocity field in this case can grow at most as a quadratic polynomial in time. The usage of the following $L^\infty$ analogue of the interpolation inequality \eqref{A.1-copy}:
\begin{equation}\label{inf-ineq}
\|u\|_{L^\infty(B^R_0)}\le C\(\|\rott u\|_{L^\infty(B^{2R}_0)}^{1/2}\|u\|_{L^2(B^{2R}_0)}^{1/2}+\frac1R\|u\|_{L^2(B^{2R}_0)}\)
\end{equation}
which can be proved analogously to \eqref{A.1-copy} allows us to improve essentially the inequality \eqref{Egrow}. Indeed,
applying \eqref{inf-ineq} for the vector field $u(t)$, fixing $R=(t+1)^4$ and using that
\begin{equation}
\|\rott u(t)\|_{L^\infty}\le C(\|\rott u_0\|_{L^\infty}+t\|\rott g\|_{L^\infty})
\end{equation}
(which is the analogue of \eqref{9} for the case of $\alpha=0$) together with estimate \eqref{R-dis}, we see that
\begin{equation}\label{R-better}
\|u(t)\|_{L^2_b}\le\|u(t)\|_{L^\infty}\le C(t+1)^3,
\end{equation}
where $C$ depends on $g$ and $u_0$ but is independent of $t$. Finally, in the particular case where $g=0$, taking $R=(t+1)^2$ and using estimate \eqref{R_0-dis}, we see that
\begin{equation}\label{R_0-better}
\|u(t)\|_{L^2_b}\le C(t+1),
\end{equation}
where $C$ depends on $u_0$, but is independent of $t$. Actually, we do not know whether or not the $\|u(t)\|_{L^2_b}$ norm can grow as $t\to\infty$. However, it has been recently established in \cite{Gal-Sli} that in the case of damped Navier-Stokes equation in an infinite cylinder (with the periodicity assumption with respect to one variable, say, $x_1$), the corresponding solution remains bounded as $t\to\infty$. We also mention that estimate \eqref{R_0-better} has been  recently obtained in \cite{Gal14} based on a slightly different representation of  the non-linearity and   pressure term in Navier-Stokes equation which allows  to avoid the usage of rather delicate interpolation inequality \eqref{inf-ineq}.
\end{remark}

\section{Uniqueness and enstrophy equality}\label{s4}
The aim of this section is to adapt the Yudovich proof of uniqueness for the Euler equations (see \cite{Yu1}) to the case of uniformly local spaces. The key technical thing for this proof is the following lemma.
\begin{lemma}\label{Lem4.Yud} Let the vector field $u\in\Cal H_b$. Then, the following estimate holds
\begin{equation}\label{7}
\|u\|_{W^{1,p}_b}\le Cp\|u\|_{\Cal H_b},
\end{equation}
where the constant $C$ is independent of $p>2$ and $u\in\Cal H_b$.
\end{lemma}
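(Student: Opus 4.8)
The goal is to bound the uniformly local Sobolev norm $\|u\|_{W^{1,p}_b}$ linearly in $p$, using only that $u$ has $L^2_b$-control on itself and $L^\infty$-control on its vorticity $\omega=\rott u$ (and divergence-free). The plan is to reduce the $W^{1,p}_b$ estimate to a local elliptic/div-curl estimate on balls and then track the dependence of the relevant constant on $p$. First I would recall that for a divergence-free vector field, $\nabla u$ is recovered from $\omega$ via a Biot--Savart type singular integral operator, so $\|\nabla u\|_{L^p}$ is controlled by $\|\omega\|_{L^p}$ up to the $L^p$ operator norm of a Calder\'on--Zygmund operator. The essential analytic input is that the $L^p$ norm of such singular integral (or Riesz-transform) operators grows at most linearly in $p$ as $p\to\infty$; this is exactly the classical fact underlying Yudovich's uniqueness proof.

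The main steps, in order, would be the following. \emph{Step 1:} Localize. Fix $x_0\in\R^2$ and apply the cut-off $\varphi_{R,x_0}$ (or a simpler $\psi$ supported in $B^2_{x_0}$, equal to one on $B^1_{x_0}$) to reduce the estimate on $B^1_{x_0}$ to a global estimate for the compactly supported field $v:=\psi u$. \emph{Step 2:} Compute $\rott v$ and $\divv v$: since $\divv u=0$ and $\rott u=\omega$, the cut-off produces $\rott v=\psi\omega+(\text{terms involving }\nabla\psi\cdot u)$ and similarly $\divv v=(\text{terms involving }\nabla\psi\cdot u)$, all of which are controlled in $L^p(B^2_{x_0})$ by $\|\omega\|_{L^\infty}$ and $\|u\|_{L^2_b}$ (the lower-order terms have bounded support, so their $L^p$ norm is controlled by their $L^\infty$ or $L^2$ norm up to constants independent of $p$ after absorbing the finite-measure factor). \emph{Step 3:} Apply the div-curl elliptic estimate $\|\nabla v\|_{L^p(\R^2)}\le C_p(\|\rott v\|_{L^p}+\|\divv v\|_{L^p})$ with the \textbf{explicit linear growth} $C_p\le Cp$, and combine with $\|v\|_{L^p(B^2_{x_0})}\le\|u\|_{L^p(B^2_{x_0})}$. \emph{Step 4:} Convert the right-hand side $L^p$-norms on the fixed ball $B^2_{x_0}$ into the claimed norms: since the balls have bounded radius, $\|\omega\|_{L^p(B^2_{x_0})}\le C\|\omega\|_{L^\infty}$ (constant independent of $p$ after the uniform bound on $|B^2|$) and the $L^p$-norm of $u$ on $B^2_{x_0}$ is interpolated between $\|u\|_{L^2_b}$ and $\|u\|_{L^\infty}$; the latter is itself controlled via the embedding \eqref{2.sm}, so ultimately everything is dominated by $\|u\|_{\Cal H_b}$. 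Taking the supremum over $x_0\in\R^2$ then yields $\|u\|_{W^{1,p}_b}\le Cp\|u\|_{\Cal H_b}$.

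The step I expect to be the main obstacle is \emph{Step 3}: establishing that the div-curl (equivalently singular-integral) constant grows exactly linearly in $p$. One must be careful that the implied constant in $C_p\le Cp$ is genuinely uniform in $p$ for all $p>2$ and does not hide an extra $p$-dependence coming from the localization or from interpolating the $L^p$-norm of $u$; in particular the lower-order commutator terms generated by $\nabla\psi$ must be shown to contribute constants independent of $p$ (this works because those terms live on a compact set, so their $L^p$ norm is bounded by a fixed $L^\infty$ bound times $|B^2_{x_0}|^{1/p}\le C$). A clean way to get the linear-in-$p$ singular integral bound is to invoke the Calder\'on--Zygmund theory together with the standard fact that the Riesz transforms (hence the second-order kernel $K_{ij}$ appearing already in \eqref{0.6}) have $L^p$-operator norm $O(p)$ as $p\to\infty$; alternatively one can cite the corresponding bound used in Yudovich's original argument. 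Once this linear growth is in hand, the remaining manipulations are routine applications of the uniformly local machinery from Section~\ref{s1}.
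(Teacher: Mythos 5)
Your proposal follows essentially the same route as the paper: localize with a cut-off, apply a div-curl elliptic estimate whose constant grows linearly in $p$ (the paper cites Yudovich's bound $\|Av\|_{W^{2,p}}\le C\bigl(p+\frac{1}{p-1}\bigr)\|v\|_{L^p}$ for the inverse Dirichlet/Neumann Laplacians combined with the Leray--Helmholtz decomposition, which is the bounded-domain counterpart of your Calder\'on--Zygmund/Riesz-transform $O(p)$ bound), absorb the cut-off commutator terms using the pointwise control of $u$, and take the supremum over $x_0$. The only slip is your appeal to \eqref{2.sm} for the $L^\infty$ bound on $u$, which would be circular since that embedding presupposes the $W^{1,q}_b$ control being proved; replace it by the direct estimate $\|u\|_{L^\infty(B^2_{x_0})}\le C(\|\rott u\|_{L^\infty(B^3_{x_0})}+\|u\|_{L^2(B^3_{x_0})})$ (i.e.\ \eqref{4.keyest1} or \eqref{inf-ineq}), exactly as the paper does.
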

\begin{proof} Estimate \thetag{7} follows from the following analogous estimate in the case of bounded domains established by Yudovich.
\begin{proposition} Let $\Omega\subset\R^2$ be a smooth bounded domain. Then, for any vector field $v\in [W^{1,p}(\Omega)]^2$ such that $v.n\big|_{\partial\Omega}=0$ and any $1<p<\infty$, the following estimate holds:
\begin{equation}\label{4.Jureg}
\|v\|_{W^{1,p}(\Omega)}\le C(p+\frac1{p-1})\(\|\divv v\|_{L^p(\Omega)}+\|\rott v\|_{L^p(\Omega)}\),
\end{equation}
where the constant $C$ is independent of $p$ and $v$.
\end{proposition}
Indeed, due to Leray-Helmholtz decomposition, the vector field $v$ can be expressed in terms of inverse Laplacians as follows:
\begin{equation}
v=-\Nx(-\Dx)_N^{-1}\divv v-\Nx^{\perp}(-\Dx)^{-1}_D\rott v,
\end{equation}
where $(-\Dx)_D^{-1}$ and $(-\Dx)_N^{-1}$ are inverse Laplacians with Dirichlet and Neumann boundary conditions respectively and $\Nx^{\perp}:=(-\partial_{x_2},\partial_{x_1})$ is the orthogonal complement to the gradient. Thus, to verify \eqref{4.Jureg}, it is enough to know that
\begin{equation}\label{4.max}
\|Av\|_{W^{2,p}(\Omega)}\le C(p+\frac1{p-1})\|v\|_{L^p(\Omega)},
\end{equation}
for the case $A=(-\Dx)_D^{-1}$ and $A=(-\Dx)_N^{-1}$ (of course, for the case of Neumann boundary conditions we need the extra zero mean assumption). The proof of estimate \eqref{4.max} can be found in \cite{Yu2}. Thus, estimate \eqref{4.Jureg} is verified and we may return to the proof of the desired estimate \eqref{7}. Let $\varphi_{x_0}$ be the smooth cut-off function which equals one identically if $x\in B^1_{x_0}$ and zero outside of the ball $B^2_{x_0}$. Then applying \eqref{4.Jureg} to the function $v:=\varphi_{x_0}u$ and $\Omega=B^2_{x_0}$, we get
\begin{equation}
\|u\|_{W^{1,p}(B^1_{x_0})}\le C p(\|\rott u\|_{L^p(B^2_{x_0})}+\|u\|_{L^\infty(B^2_{x_0})})\le Cp(\|\rott u\|_{L^\infty(B^3_{x_0})}+\|u\|_{L^2(B^3_{x_0})}),
\end{equation}
where we have used the obvious estimate $\|u\|_{L^\infty(B^2_{x_0})}\le C(\|\rott u\|_{L^{\infty}(B^3_{x_0})}+\|u\|_{L^2(B^3_{x_0})})$ and the fact that $p>2$ is separated from the singularity at $p=1$. Taking the supremum over $x_0\in\R^2$, we obtain the desired estimate \eqref{7} and finish the proof of the lemma.
\end{proof}

The main result of the section is the following theorem.
\begin{theorem}\label{Th4.uni} Let $u_1(t)$ and $u_2(t)$ be two weak solutions of the damped Euler equation \eqref{1}. Then, the following estimate holds:
\begin{equation}\label{4.unilip}
\|u_1(t)-u_2(t)\|_{L^2_b}^2\le K e\(\frac{\|u_1(0)-u_2(0)\|_{L^2_b}^2}{K}\)^{e^{-Lt}},
\end{equation}
where the positive constants $K$ and $L$ depend on the $\Cal H_b$-norms of $u_1(0)$ and $u_2(0)$, but are independent of $t$. In particular, the weak solution of the damped Euler solution is unique.
\end{theorem}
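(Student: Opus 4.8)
The plan is to follow the classical Yudovich uniqueness argument, but carried out in the weighted $L^2$-spaces rather than on a bounded domain, using the key regularity estimate \eqref{7} as the substitute for elliptic regularity. Let $w:=u_1-u_2$ and $\bar p:=p_1-p_2$. Subtracting the two copies of \eqref{1}, the damping term $\alpha u$ and the force $g$ cancel, leaving
\begin{equation}
\Dt w+(u_1,\Nx)w+(w,\Nx)u_2+\alpha w+\Nx\bar p=0,\qquad \divv w=0.
\end{equation}
First I would multiply this equation by $\theta_{x_0}w$ (where $\theta_{x_0}$ is the cubic weight \eqref{1.theta}) and integrate over $x\in\R^2$, in order to derive a differential inequality for the quantity
\begin{equation}
y(t):=\sup_{x_0\in\R^2}\|w(t)\|_{L^2_{\theta_{x_0}}}^2.
\end{equation}
Working with the weighted norm rather than the plain $L^2_b$-norm is essential because the uniformly local sup-norm does not interact well with integration by parts, whereas the weight $\theta_{x_0}$ is integrable and its gradient is controlled by the weight itself; the pressure term is then handled by Lemma \ref{Lem0.2} (with $p=q=2$), which is precisely the tool that lets us absorb $\Nx\bar p$ into the weighted energy.

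The three transport-type terms are estimated as follows. The term $((u_1,\Nx)w,\theta_{x_0}w)$ is integrated by parts; since $\divv u_1=0$, only the derivative falling on the weight survives, giving a contribution bounded by $\|u_1\|_{L^\infty}\,\||\Nx\theta_{x_0}|^{1/2}\cdot\theta_{x_0}^{-1/2}\|_{L^\infty}$ times the weighted energy — here I use the boundedness of $u_1$ coming from \eqref{2.sm} together with $|\Nx\theta_{x_0}|\le C\theta_{x_0}$. The genuinely delicate term is $((w,\Nx)u_2,\theta_{x_0}w)$, which cannot be integrated by parts onto $u_2$ without losing a derivative. The standard Yudovich trick is to estimate it by H\"older with exponents $(p,p,p')$ where $p'=p/(p-1)$:
\begin{equation}
|((w,\Nx)u_2,\theta_{x_0}w)|\le \|\Nx u_2\|_{L^p_b}\,\|\theta_{x_0}^{1/p'}w\|_{L^{2p'}}^2,
\end{equation}
and then to invoke Lemma \ref{Lem4.Yud} to get $\|\Nx u_2\|_{L^p_b}\le Cp\|u_2\|_{\Cal H_b}$, the linear growth in $p$ being exactly what produces the log-Lipschitz modulus of continuity. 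The remaining factor $\|\theta_{x_0}^{1/p'}w\|_{L^{2p'}}^2$ must be interpolated between the weighted $L^2$-norm and the $L^\infty$-bound on $w$ (which is finite and uniform by \eqref{2.sm}), yielding a power $y(t)^{1/p'}$ up to constants depending on the known $\Cal H_b$-bounds.

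Collecting everything, I expect a differential inequality of the shape
\begin{equation}
\frac{d}{dt}y(t)\le C_0\,y(t)+C_1\,p\,y(t)^{1-1/p}\qquad\text{for every }p>2,
\end{equation}
where $C_0,C_1$ depend only on the $\Cal H_b$-norms of $u_1(0),u_2(0)$ through the dissipative estimate \eqref{8}. After absorbing the linear term (or rescaling time) one optimizes over $p$: choosing $p\sim\log(1/y)$ converts the right-hand side into the Osgood-type modulus $C\,y\log(1/y)$, and integrating the resulting ODE $\dot y\le Ly\log(K/y)$ by separation of variables gives the double-exponential bound \eqref{4.unilip}. Finally I would pass from the weighted norm back to $L^2_b$ using Proposition \ref{Prop1.more} and Corollary \ref{Cor1.conv}, noting that the supremum over $x_0$ of the weighted norms is comparable to the uniformly local norm, so that $y(t)$ and $\|w(t)\|_{L^2_b}^2$ are equivalent up to harmless constants. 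The main obstacle is the treatment of the term $((w,\Nx)u_2,\theta_{x_0}w)$: making the interpolation uniform in $p$ and in the base-point $x_0$, while keeping the constant's dependence on $p$ exactly linear so that the Osgood argument closes, is the crux — this is where the weighted formulation, the $L^\infty$-control from \eqref{2.sm}, and the sharp estimate \eqref{7} all have to fit together.
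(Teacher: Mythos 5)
Your proposal follows essentially the same route as the paper's proof: Yudovich's argument transplanted to weighted $L^2$-spaces, with Lemma \ref{Lem4.Yud} supplying the linear-in-$p$ bound on $\|\Nx u_i\|_{W^{1,p}_b}$, Lemma \ref{Lem0.2} handling the pressure, the $L^\infty$--$L^2$ interpolation producing the power $y^{1-1/p}$, the choice $p\sim\ln(K/y)$ yielding the Osgood modulus $y\ln(K/y)$, and integration of the resulting ODE giving the double-exponential estimate \eqref{4.unilip}. The only organizational difference is that the paper first tests with the cut-off $\varphi_{R,x_0}w$ (so that Lemma \ref{Lem0.2} applies as stated) and only afterwards averages against $\theta_{R,y}$, using Jensen's inequality for the concave function $z\mapsto z\ln(K/z)$ to pass the logarithm through the weighted integral, whereas you work with the weight directly and optimize $p$ after taking the supremum over $x_0$ --- a harmless variation.
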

\begin{proof}
Let $u_1$ and $u_2$ be two solutions of \thetag{1} and $w=u_1-u_2$. Then, this function solves
\begin{equation}\label{23}
\Dt w+(w,\Nx )u_1+(u_2,\Nx)w+\alpha w+\Nx p=0.
\end{equation}
Multiplying \thetag{23} by $w\varphi_{R,x_0}$, where $\varphi_{R,x_0}$ is the same as in \eqref{11} and $R>1$ and $x_0\in\R^2$ are arbitrary, after the straightforward calculations, we have
\begin{multline}
\frac12\frac d{dt}\|w\|^2_{L^2_{\varphi_{R,x_0}}}+\alpha \|w\|^2_{L^2_{\varphi_{R,x_0}}}\le C(|\Nx u_1|+|\Nx u_2|,w^2\varphi_{R,x_0})+\\+
CR^{-1}(|u_2|,w^2)_{L^2(B^{2R}_{x_0})}+|(\Nx P(u_1\otimes u_1-u_2\otimes u_2),w\varphi_{R,x_0})|.
\end{multline}
To estimate the term with pressure, we use \eqref{0.15} with $p=2$ which gives
\begin{multline}
|(\Nx P(u_1\otimes u_1-u_2\otimes u_2),w\varphi_{R,x_0})|\le\\\le C\|w\|_{L^2(B^{2R}_{x_0})}\int_{y\in\R^2}\theta_{R,x_0}(y)\|u_1\otimes u_1-u_2\otimes u_2\|_{L^2(B^R_{y})}\,dy.
\end{multline}
Using now that $\|u_i\|_{L^\infty}\le C$, $i=1,2$, where the constant $C$ depends on the $\Cal H_b$-norms of the initial data (thanks to the dissipative estimate \eqref{8} and the obvious embedding $\Cal H_b\subset L^\infty$), together with the Cauchy-Schwartz inequality and the straightforward inequality
$$
\|u\|_{L^2(B^{2R}_{x_0})}^2\le C_R\int_{y\in\R^2}\theta_{R,x_0}(y)\|u\|^2_{L^2(B^{R}_{y})}\,dy,
$$
we end up with
\begin{multline}\label{24}
\frac12\frac d{dt}\|w\|^2_{L^2_{\varphi_{R,x_0}}}+\alpha \|w\|^2_{L^2_{\varphi_{R,x_0}}}\le C_R\int_{y\in\R^2}\theta_{R,x_0}(y)\|w\|^2_{L^2(B^{R}_{y})}\,dy+\\+ C(|\Nx u_1|+|\Nx u_2|,w^2\varphi_{R,x_0}).
\end{multline}
Thus, we only need to estimate the most complicated last term in the RHS of \thetag{24}. To this end, we will essentially use \thetag{7}
and the fact that $\|u_i\|_{\Cal H_b}\le C$. Then,  due to the interpolation inequality
$$
\|w\|_{L^{2p/(p-1)}}\le C\|w\|_{L^\infty}^\theta\|w\|_{L^2}^{1-\theta},\ \ \theta=\frac1p
$$
which holds for any $p>2$, we end up with
\begin{multline}
|(|\Nx u_1|+|\Nx u_2|,w^2\varphi_{R,x_0})|\le C_R(\|u_1\|_{W^{1,p}_b}+\|u_2\|_{W^{1,p}_b})\|w\|^2_{L^{2p/(p-1)}(B^{2R}_{x_0})}\le\\\le C p\|w\|_{L^\infty}^{2/p}\|w\|_{L^2(B^{2R}_{x_0})}^{2(p-1)/p}\le C p\|w\|^{2(p-1)/p}_{L^2(B^{2R}_{x_0})}
\end{multline}
Let us take here $p=\ln\(\frac{K}{\|w\|^2_{L^2(B^{2R}_{x_0})}}\)$, where $K$ is large enough to guarantee that $p>2$. Such $K=K(\|u_1(0)\|_{\Cal H_b},\|u_2(0)\|_{\Cal H_b})$ exists since $u_1$ and $u_2$ are globally bounded in the $L^2_b$-norm and, consequently,
$$
\|w\|_{L^2(B^{2R}_{x_0})}\le \|u_1\|_{L^2_{b,2R}}+\|u_2\|_{L^2_{b,R}}\le Q_R(\|u_1(0)\|_{\Cal H_b}+\|u_2(0)\|_{\Cal H_b})
$$
for some monotone increasing function $Q_R$.
Then, we get

$$
|(|\Nx u_1|+|\Nx u_2|,w^2\varphi_{R,x_0})|\le C\|w\|^2_{L^2(B^{2R}_{x_0})}\ln\(\frac{K}{\|w\|^2_{L^2(B^{2R}_{x_0})}}\)
$$
and \thetag{24} reads
\begin{equation}\label{25}
\frac d{dt}\|w\|^2_{L^2_{\varphi_{R,x_0}}}\le C_R\int_{y\in\R^2}\theta_{R,x_0}(y)\|w\|^2_{L^2(B^{R}_{y})}\,dy  +C\|w\|^2_{L^2(B^{2R}_{x_0})}\ln\(\frac{K}{\|w\|^2_{L^2(B^{2R}_{x_0})}}\).
\end{equation}
We now use that the function $z\to z\log \frac K{z}$ is concave. Then, due to Jensen inequality
\begin{multline}
\int_{x_0\in\R^2}\theta_{R,y}(x_0)\|w\|^2_{L^2(B^{2R}_{x_0})}\ln\(\frac{K}{\|w\|^2_{L^2(B^{2R}_{x_0})}}\)\,dx_0\le\\\le
 \int_{x_0\in\R^2}\theta_{R,y}(x_0)\|w\|^2_{L^2(B^{2R}_{x_0})}\,dx_0\ln\(\frac {K\int_{x_0\in\R^2}\theta_{R,y}(x_0)\,dx_0}{\int_{x_0\in\R^2}\theta_{R,y}(x_0)\|w\|^2_{L^2(B^{2R}_{x_0})}\,dx_0}\).
\end{multline}
Using also that the function $z\to z\ln\frac Kz$ is monotone increasing if $z\le Ke^{-1}$, together with \eqref{14} and \eqref{1.eqtheta}, we get
$$
\int_{x_0\in\R^2}\theta_{R,y}(x_0)\|w\|^2_{L^2(B^{2R}_{x_0})}\ln\(\frac{K}{\|w\|^2_{L^2(B^{2R}_{x_0})}}\)\,dx_0\le C Z_{R,y}(w)\ln \frac {K_1}{Z_{R,y}(w)}
$$
for some new constant $K_1>0$.
Multiplying now \thetag{25} by $\theta_{R,y}(x_0)$, $y\in\R^2$, integrating over $x_0\in\R^2$ and \eqref{1.thetakey},
we finally arrive at
\begin{equation}\label{26}
\frac d{dt} Z_{R,y}(w)\le L Z_{R,y}(w)+C Z_{R,y}(w)\ln\frac{K}{Z_{R,y}(w)}
\end{equation}
for some new constants $L$ and $K$ depending on the $\Cal H_b$-norms of the initial data. Integrating this inequality, we have
\begin{equation}\label{4.loclip}
Z_{R,y}(w(t))\le K\(\frac {Z_{R,y}(w(0))}K\)^{e^{-Lt}}e^{1-e^{-Lt}}.
\end{equation}
Fixing, say, $R=1$ in this estimate and taking the supremum over $y\in\R^2$, we end up with the desired estimate \eqref{4.unilip} and finish the proof of the theorem.
\end{proof}
We conclude this section by reminding the so-called weighted enstrophy equality which will be used in the next section in order to verify the convergence to the attractor in a strong topology. Indeed, multiplying formally equation \eqref{3.vort} by $\omega\phi_{\eb,x_0}$ where $\phi_{\eb,x_0}(x):=e^{-\eb|x-x_0|}$, integrating over $x\in\R^2$ and using that $\divv u=0$, we arrive at
\begin{equation}\label{4.enst}
\frac12\frac d{dt}\|\omega\|^2_{L^2_{\phi_{\eb,x_0}}}+\alpha\|\omega\|^2_{L^2_{\phi_{\eb,x_0}}}-(u.\Nx\phi_{\eb,x_0},|\omega|^2)=(\rott g,\omega\phi_{\eb,x_0}).
\end{equation}
However, these arguments require justification since for weak solutions $\omega\in L^\infty([0,T]\times\R^2)$ only and the term $((u,\Nx)\omega,\omega\phi_{\eb,x_0})$ is not rigorously defined. We overcome this difficulty using the mollification operators and arguing as in \cite{DiL}.
\begin{theorem} Let $u$ be a weak solution of the damped Euler problem \eqref{1} and let $\eb>0$ and $x_0\in\R^2$ be arbitrary. Then, the function $t\to\|\omega(t)\|^2_{L^2_{\phi_{\eb,x_0}}}$ is absolutely continuous and the equality \eqref{4.enst} holds for almost all $t$.
\end{theorem}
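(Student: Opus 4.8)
The plan is to make the formal derivation of \eqref{4.enst} rigorous by the DiPerna--Lions renormalization procedure, mollifying the vorticity in the space variable and controlling the resulting commutator in the \emph{weighted} norm generated by $\phi_{\eb,x_0}$. Fix a standard mollifier $\rho_\delta(x)=\delta^{-2}\rho(x/\delta)$ with $\rho\ge0$ and $\int_{\R^2}\rho=1$, and set $\omega_\delta:=\rho_\delta*\omega$ (convolution in $x$). Convolving the transport equation \eqref{3.vort} with $\rho_\delta$ and writing $(u,\Nx)\omega=\divv(u\omega)$ (legitimate since $\divv u=0$), one obtains
\begin{equation}
\Dt\omega_\delta+\alpha\omega_\delta+(u,\Nx)\omega_\delta=(\rott g)_\delta+r_\delta,
\end{equation}
where the commutator admits the representation
\begin{equation}
r_\delta(x)=\int_{\R^2}\Nx\rho_\delta(x-y)\cdot\big(u(x)-u(y)\big)\,\omega(y)\,dy.
\end{equation}

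Since $\omega\in L^\infty([0,T]\times\R^2)$ and $\rho_\delta\ge0$ has unit mass, mollification does not increase the sup-norm, $\|\omega_\delta\|_{L^\infty}\le\|\omega\|_{L^\infty}$, while $\omega_\delta$ is now smooth in $x$. As $\phi_{\eb,x_0}$ is integrable and of exponential growth rate, every term of the regularized equation belongs to $L^\infty([0,T],L^2_{\phi_{\eb,x_0}})$; in particular $t\mapsto\|\omega_\delta(t)\|^2_{L^2_{\phi_{\eb,x_0}}}$ is absolutely continuous. Multiplying the regularized equation by $\omega_\delta\phi_{\eb,x_0}$, integrating over $\R^2$, and integrating the transport term by parts (using $\divv u=0$ and the exponential decay of $\phi_{\eb,x_0}$ to discard the boundary contributions at infinity), I would arrive at the regularized analogue of \eqref{4.enst},
\begin{equation}
\tfrac12\tfrac{d}{dt}\|\omega_\delta\|^2_{L^2_{\phi_{\eb,x_0}}}+\alpha\|\omega_\delta\|^2_{L^2_{\phi_{\eb,x_0}}}-(u.\Nx\phi_{\eb,x_0},|\omega_\delta|^2)=((\rott g)_\delta,\omega_\delta\phi_{\eb,x_0})+(r_\delta,\omega_\delta\phi_{\eb,x_0}),
\end{equation}
which differs from the target identity only by the commutator remainder $(r_\delta,\omega_\delta\phi_{\eb,x_0})$.

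It remains to integrate this identity over an arbitrary interval $[s,t]$ and let $\delta\to0$. Because $\omega\in L^\infty$ and $\phi_{\eb,x_0}\in L^1$, for every $\tau$ one has $\omega_\delta(\tau)\to\omega(\tau)$ in $L^2_{\phi_{\eb,x_0}}$ together with a uniform $L^\infty$ bound, so dominated convergence lets the quadratic terms $\alpha\|\omega_\delta\|^2$ and $(u.\Nx\phi_{\eb,x_0},|\omega_\delta|^2)$, the forcing term, and the boundary terms $\|\omega_\delta(\cdot)\|^2_{L^2_{\phi_{\eb,x_0}}}$ converge to their counterparts for $\omega$. Granting the vanishing of the commutator contribution (see below), the limit is precisely the integrated (weak) form of \eqref{4.enst}; since each term under the time integral is a bounded function of $\tau\in[0,T]$, the right-hand side of the integrated identity is absolutely continuous in $t$, which forces $t\mapsto\|\omega(t)\|^2_{L^2_{\phi_{\eb,x_0}}}$ to agree almost everywhere with an absolutely continuous function, and differentiation then yields \eqref{4.enst} for almost every $t$.

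The step I expect to be the main obstacle is showing $\int_s^t(r_\delta,\omega_\delta\phi_{\eb,x_0})\,d\tau\to0$, which is where the whole-space DiPerna--Lions theory of \cite{DiL} must be combined with the weighted estimates of Section \ref{s1}. By Lemma \ref{Lem4.Yud} we have $u\in L^\infty([0,T],W^{1,p}_b)$ for every finite $p$, and $\omega\in L^\infty$; hence the classical commutator lemma gives $r_\delta\to0$ in $L^p_{loc}([0,T]\times\R^2)$. To upgrade this to the weighted setting, I would use the representation of $r_\delta$ together with the facts that $\Nx\rho_\delta(x-y)$ is supported in $\{|x-y|\le\delta\}$, that $|u(x)-u(y)|$ there is controlled by the average of $|\Nx u|$ on a $\delta$-ball, and that $\phi_{\eb,x_0}$ varies slowly on that scale, i.e. $\phi_{\eb,x_0}(x)\le Ce^{\eb\delta}\phi_{\eb,x_0}(y)$ for $|x-y|\le\delta$. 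This produces a uniform-in-$\delta$ bound $\int_{\R^2}|r_\delta|\phi_{\eb,x_0}\,dx\le C\|\omega\|_{L^\infty}\|\Nx u\|_{L^1_{\phi_{\eb,x_0}}}<\infty$, the last norm being finite by \eqref{1.w-ul} and H\"older. Splitting $\R^2$ into a large ball (where local convergence applies) and its complement (where $\phi_{\eb,x_0}$ is small and the uniform bound controls the tail) gives $\int_{\R^2}|r_\delta|\phi_{\eb,x_0}\,dx\to0$; since $\|\omega_\delta\|_{L^\infty}\le\|\omega\|_{L^\infty}$, the commutator term vanishes after integration in time. This marries the mollification argument of \cite{DiL} to the weighted framework and completes the proof.
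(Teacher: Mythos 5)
Your proposal is correct and follows essentially the same route as the paper: mollify the vorticity equation in space, derive the regularized weighted enstrophy identity, and kill the DiPerna--Lions commutator in the limit by combining its local $L^p$ convergence to zero with a uniform bound in the uniformly local (equivalently, weighted) norm and the integrability of $\phi_{\eb,x_0}$ to control the tail. The only difference is cosmetic: you spell out the weighted commutator estimate explicitly, whereas the paper cites Lemma~II.1 of \cite{DiL} for the uniform $L^\infty([0,T],L^p_b)$ bound and the $L^1([0,T],L^p_{loc})$ convergence and then passes to the limit in the time-integrated identity.
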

\begin{proof} Indeed, let $S_\mu v:=\rho_\mu*v$ where $\rho_\mu(x)=\mu^{-2}\phi(x\mu^{-1})$, $\mu>0$ and $\rho$ is a standard mollification kernel. Then, applying $S_\mu$ to both sides of equation \eqref{3.vort} and denoting $\omega_\mu:=S_\mu\omega$, we have
\begin{equation}\label{4.sm}
\Dt\omega_\mu+\alpha\omega_\mu+(u,\Nx)\omega_\mu=\rott g_\mu+R_\mu,
\end{equation}
where $R_\mu:=((u,\Nx)\omega))*\rho_\mu-(u,\Nx)(\omega*\rho_\mu)$. Using the fact that $u(t)\in W^{1,p}_b(\R^2)$ for all $p<\infty$ and arguing exactly as in \cite{DiL}, we see that $R_\mu$ is uniformly with respect to $\mu\to0$ bounded in $L^\infty([0,T],L^p_b(\R^2))$ and
\begin{equation}\label{4.conv}
R_\mu\to0\ \ \text{ in } L^1([0,T], L^p_{loc}(\R^2)),
\end{equation}
see \cite{DiL}, Lemma II.1, page 516. Multiplying  \eqref{4.sm} by $\omega_\mu\phi_{\eb,x_0}$ (which is now allowed since $\omega_\mu$ is smooth in $x$) after the integration over $x$ and $t$, we get that for every $t\ge s\ge0$
\begin{multline}
\frac12\(\|\omega_\mu(t)\|_{L^2_{\phi_{\eb,x_0}}}^2-\|\omega_\mu(s)\|_{L^2_{\phi_{\eb,x_0}}}^2\)=
\int^t_s\((\rott g_\mu,\omega_\mu(\tau)\phi_{\eb,x_0})+\right.\\\left.+(R_\mu(\tau),\omega_\mu(\tau)\phi_{\eb,x_0})+
(u(\tau).\Nx\phi_{\eb,x_0},|\omega_\mu(\tau)|^2)-\alpha\|\omega_\mu(\tau)\|^2_{L^2_{\phi_{\eb,x_0}}}\)\,d\tau.
\end{multline}
Passing to the limit $\mu\to0$ in this equality and using \eqref{4.conv} together with the standard convergence properties of the mollification operators as well as that
\begin{equation}
\omega\in L^\infty(R_+\times\R^2)\cap C_w([0,T],L^2_{\phi_{\eb,x_0}}(\R^2)),
\end{equation}
we end up with the integral equality
\begin{multline}
\frac12\(\|\omega(t)\|_{L^2_{\phi_{\eb,x_0}}}^2-\|\omega(s)\|_{L^2_{\phi_{\eb,x_0}}}^2\)=
\int^t_s\((\rott g,\omega(\tau)\phi_{\eb,x_0})+\right.\\\left.+
(u(\tau).\Nx\phi_{\eb,x_0},|\omega(\tau)|^2)-\alpha\|\omega(\tau)\|^2_{L^2_{\phi_{\eb,x_0}}}\)\,d\tau
\end{multline}
which is equivalent to \eqref{4.enst} and finish the proof of the theorem.
\end{proof}

\section{The attractor}\label{s5}
The aim of this section is to verify the existence of the attractor for the damped Euler equation in the uniformly local spaces.
We first remain that according to Theorems \ref{Th3.main} and \ref{Th4.uni}, equation \eqref{1} generates a solution semigroup $S(t)$ in the phase space $\Cal H_b$:
\begin{equation}\label{5.sem}
S(t):\Cal H_b\to\Cal H_b,\ \ S(t)u_0:=u(t),\ \ t\ge0,
\end{equation}
where $u(t)$ is a unique solution of \eqref{1} with the initial data $u_0\in\Cal H_b$. Moreover, according to estimate \eqref{8} it is dissipative in the space $\Cal H_b$, i.e., the following estimate holds:
\begin{equation}\label{5.dis}
\|S(t)u_0\|_{\Cal H_b}\le Q(\|u_0\|_{\Cal H_b})e^{-\beta t}+Q(\|g\|_{\Cal H_b})
\end{equation}
for some positive $\beta$ and monotone increasing $Q$ which are independent of $t$ and $u_0$ and, according to estimates \eqref{4.unilip} and \eqref{4.loclip}, the maps $S(t)$ are locally H\"older continuous in the space $L^2_b(\R^2)$ and as well as in the space  $L^2_{\theta_{R,x_0}}(\R^2)$.
\par
As usual, in the case of unbounded domains and infinite energy solutions, see \cite{MZ08} for more details, we cannot expect the existence of a global attractor in the uniform topology of $\Cal H_b$, but only in the local topology of
\begin{equation}
\Cal H_{loc}:=\{u\in [L^2_{loc}(\R^2)]^2,\ \ \divv u=0,\ \ \omega\in L^\infty_{loc}(\R^2)\}.
\end{equation}
However, we do not know whether or not the above defined semigroup $S(t)$ is asymptotically compact in the strong topology of $\Cal H_b$, so we have to use the weak star topology in $\Cal H_{loc}$ (which we will further denote by $\Cal H_{loc}^{w^*}$) in order to define the convergence to the global attractor. We recall that a sequence $u_n\in\Cal H_{loc}$ converges weakly star in $\Cal H_{loc}$ to some function $u\in\Cal H_{loc}$ iff for any ball $B^R_{x_0}$ the restrictions $u_n\big|_{B^R_{x_0}}$ converge weakly to $u\big|_{B^R_{x_0}}$ in $L^2(B^R_{x_0})$ and the restrictions $\omega_n\big|_{B^R_{x_0}}$ converge weakly star to $\omega\big|_{B^R_{x_0}}$ in the space $L^\infty(B^R_{x_0})$. Remind also that any closed ball in $\Cal H_b$ is {\it metrizable} and is {\it compact} in the topology of $\Cal H_{loc}^{w^*}$, see \cite{RR}. Thus, we will use the following version of a global attractor.

\begin{definition}\label{Def5.attr} Let $S(t):\Cal H_b\to\Cal H_b$ be a semigroup. Then, a set  $\Cal A\subset\Cal H_b$ is a weak locally compact attractor for this semigroup iff:
\par
1) The set $\Cal A$ is bounded and closed in $\Cal H_b$ and is compact in the topology of $\Cal H_{loc}^{w^*}$;
\par
2) It is strictly invariant: $S(t)\Cal A=\Cal A$ for all $t\ge0$;
\par
3) It attracts bounded (in the topology of $\Cal H_b$) sets in the topology of $\Cal H_{loc}^{w^*}$, i.e., for every bounded set $B\subset \Cal H_b$ and every neighbourhood $\Cal O(\Cal A)$ of the attractor $\Cal A$ in the topology of $\Cal H_{loc}^{w^*}$, there exists $T=T(B,\Cal O)$ such that
\begin{equation}
S(t)B\subset\Cal O(\Cal A)
\end{equation}
for all $t\ge T$.
\end{definition}
The main result of the section is the following theorem.
\begin{theorem}\label{Th5.attr} Let the above assumptions hold. Then the solution semigroup $S(t):\Cal H_b\to\Cal H_b$ associated with the damped Euler equation \eqref{1} possesses a weak locally compact attractor $\Cal A$ in $\Cal H_b$ which is generated by all bounded solutions of the equation:
\begin{equation}\label{5.str}
\Cal A=\Cal K\big|_{t=0},
\end{equation}
where $\Cal K\subset L^\infty(\R,\Cal H_b)$ is the set of all weak solutions $u(t)$ of equation \eqref{1} which are defined for all $t\in\R$ and are bounded in $\Cal H_b$.
\end{theorem}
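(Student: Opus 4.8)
The plan is to establish the existence of the weak locally compact attractor $\Cal A$ via the standard abstract scheme for attractors in a weak topology, adapted to the fact that closed balls in $\Cal H_b$ are metrizable and compact in $\Cal H^{w^*}_{loc}$. First I would fix a closed absorbing ball. By the dissipative estimate \eqref{5.dis}, the set $\Cal B:=\{u\in\Cal H_b:\ \|u\|_{\Cal H_b}\le 2Q(\|g\|_{\Cal H_b})\}$ is absorbing: for every bounded $B\subset\Cal H_b$ there is $T=T(B)$ with $S(t)B\subset\Cal B$ for $t\ge T$. By the result of \cite{RR} recalled just before the theorem, $\Cal B$ is metrizable and compact in $\Cal H^{w^*}_{loc}$, so on $\Cal B$ we may work with sequences throughout.

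The next step is to verify the needed continuity/closedness of the trajectories in the weak topology. Since $S(t):\Cal H_b\to\Cal H_b$ is locally H\"older continuous in $L^2_b$ and in $L^2_{\theta_{R,x_0}}$ by \eqref{4.unilip} and \eqref{4.loclip}, and since $\Cal B$ is weakly compact, one obtains that the graph of $S(t)$ restricted to $\Cal B$ is closed in $\Cal H^{w^*}_{loc}\times\Cal H^{w^*}_{loc}$; equivalently, if $u_{0,n}\rightharpoonup u_0$ in $\Cal H^{w^*}_{loc}$ with all $u_{0,n}\in\Cal B$, then $S(t)u_{0,n}\rightharpoonup S(t)u_0$. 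This is proved by extracting a weakly-$\ast$ convergent subsequence of $S(t)u_{0,n}$ (possible by compactness of $\Cal B$), passing to the limit in the weak formulation of \eqref{1} — here the $L^\infty$ control of the vorticity and the continuity of the pressure operator $\Nx P$ from Lemma \ref{Lem0.1} let one identify the limit as a weak solution — and then invoking uniqueness from Theorem \ref{Th4.uni} to conclude the limit is exactly $S(t)u_0$ and that the whole sequence converges.

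With an absorbing set that is compact in $\Cal H^{w^*}_{loc}$ and a semigroup that is continuous in this topology, I would define the attractor as the omega-limit set of $\Cal B$,
\begin{equation}
\Cal A:=\omega(\Cal B)=\bigcap_{s\ge0}\overline{\bigcup_{t\ge s}S(t)\Cal B}^{\,\Cal H^{w^*}_{loc}},
\end{equation}
and verify the three defining properties of Definition \ref{Def5.attr}. Nonemptiness, boundedness in $\Cal H_b$, closedness in $\Cal H_b$, and compactness in $\Cal H^{w^*}_{loc}$ follow from $\Cal A\subset\Cal B$ together with the fact that $\Cal A$ is a weak-$\ast$ closed subset of the compact set $\Cal B$. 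Strict invariance $S(t)\Cal A=\Cal A$ and the attraction property are the standard consequences of weak continuity of $S(t)$ and the absorbing property, carried out exactly as in the classical omega-limit argument but with weak-$\ast$ convergence of subsequences in place of norm convergence. Finally, the representation \eqref{5.str} follows by the usual identification of $\omega(\Cal B)$ with the set of time-zero values of the complete bounded trajectories: every point of $\Cal A$ lies on a global bounded solution obtained by a diagonal extraction of backward orbits (using compactness of $\Cal B$ and weak continuity to pass to the limit and uniqueness to glue), and conversely every complete bounded trajectory stays in $\Cal B$ and hence has its values in $\Cal A$.

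The main obstacle I expect is the weak-$\ast$ closedness of the graph of $S(t)$, i.e. passing to the limit in the nonlinear and pressure terms of \eqref{1} under only $\Cal H^{w^*}_{loc}$ convergence. The vorticity transport structure is decisive here: the uniform $L^\infty$ bound \eqref{9} on $\omega_n=\rott u_n$ gives, via the Yudovich-type estimate \eqref{7}, uniform bounds in $W^{1,p}_{loc}$ for every $p<\infty$, hence (by Rellich) strong $L^2_{loc}$ precompactness of the velocities, which upgrades the weak-$\ast$ convergence to strong enough convergence to pass to the limit in the quadratic term $u_n\otimes u_n$ and, through Lemma \ref{Lem0.1}, in $\Nx P(u_n\otimes u_n)$. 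Once this compactness-and-passage-to-the-limit step is secured, uniqueness (Theorem \ref{Th4.uni}) does all the remaining work and the rest of the argument is the routine abstract omega-limit machinery.
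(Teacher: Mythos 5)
Your proposal is correct and follows essentially the same route as the paper: a ball that absorbs by \eqref{5.dis} and is metrizable and compact in $\Cal H_{loc}^{w^*}$, continuity of $S(t)$ on that ball in the weak star topology, and then the standard omega-limit/abstract attractor machinery, with the representation \eqref{5.str} coming for free from that machinery. The only divergence is in how the weak continuity of $S(t)$ is established: the paper deduces it directly from the H\"older continuous-dependence estimate \eqref{4.loclip} combined with the compactness of the embedding $\Cal H_{loc}\subset L^2_{loc}$ (so no passage to the limit in the equation is needed at this stage), whereas you extract a subsequence, pass to the limit in the weak formulation and invoke uniqueness --- both are valid, yours being closer to the energy-method argument the paper reserves for the strong compactness statement in Theorem \ref{Prop5.strong}.
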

\begin{proof} Indeed, according to the dissipative estimate \eqref{5.dis}, the ball
\begin{equation}
\Cal B_R:=\{u\in\Cal H_b,\ \|u\|_{\Cal H_b}\le R\}
\end{equation}
is an absorbing ball for the semigroup $S(t)$ if $R$ is large enough. This ball is metrizable and compact in the weak star topology of $\Cal H_{loc}^{w^*}$. Thus, the considered semigroup possesses an absorbing ball $\Cal B_R$ which is bounded in $\Cal H_b$ and is compact in $\Cal H_{loc}^{w^*}$. Moreover, using the fact that the semigroup is H\"older continuous on $\Cal B_R$ (due to estimate \eqref{4.loclip}) together with the compactness of the embedding $\Cal H_{loc}\subset L^2_{loc}$, it is straightforward to show that, for every fixed $t\ge0$, the operators $S(t)$ are continuous on $\Cal B_R$ in the topology of $\Cal H_{loc}^{w^*}$. Thus, all assumptions of the abstract attractor existence theorem (see e.g., \cite{BV89}) are satisfied and the existence of the attractor $\Cal A$ is proved. Formula \eqref{5.str} for the attractor's structure is also an immediate corollary of this theorem. So, Theorem \ref{Th5.attr} is proved.
\end{proof}
\begin{corollary}\label{Cor5.ws} Let the above assumptions hold. Then, for every $\eb>0$ and every $p<\infty$,
 the weak locally compact attractor $\Cal A$ is compact in $[W^{1-\eb,p}_{loc}(\R^2)]^2$ and attracts the images of bounded sets in $\Cal H_b$ in the strong topology of $W^{1-\eb,p}_{loc}(\R^2)$, i.e., for every bounded set $B\subset \Cal H_b$ and every $R>0$ and $x_0\in\R^2$
 \begin{equation}\label{5.ws}
 \lim_{t\to\infty}\operatorname{dist}_{W^{1-\eb,p}(B^R_{x_0})}\((S(t)B)\big|_{B^R_{x_0}},\Cal A\big|_{B^R_{x_0}}\)=0,
 \end{equation}
 where $\operatorname{dist}_V(X,Y)$ is a non-symmetric Hausdorff distance between sets $X$ and $Y$ of a metric space $V$.
\end{corollary}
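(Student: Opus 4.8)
The plan is to upgrade the weak-star attraction established in Theorem \ref{Th5.attr} to strong $W^{1-\eb,p}_{loc}$-attraction by combining a uniform, $\eb$-better interior Sobolev bound with the compactness of the embedding $W^{1,p}\hookrightarrow\hookrightarrow W^{1-\eb,p}$ on bounded domains. The key observation is that the absorbing ball $\Cal B_R$, and hence the attractor $\Cal A\subset\Cal B_R$, is not merely bounded in $\Cal H_b$ but in $W^{1,p}_b(\R^2)$ for every $p<\infty$: indeed, Lemma \ref{Lem4.Yud} gives $\|u\|_{W^{1,p}_b}\le Cp\|u\|_{\Cal H_b}$, so the dissipative estimate \eqref{5.dis} yields $\sup_{u\in\Cal B_R}\|u\|_{W^{1,p}_b}<\infty$ for each fixed $p$. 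Restricting to any ball $B^R_{x_0}$, this means that every orbit which has already entered $\Cal B_R$ lives in a fixed bounded subset of $W^{1,p}(B^R_{x_0})$.

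To prove compactness of $\Cal A$ I would take an arbitrary sequence $\{a_n\}\subset\Cal A$. Since $\Cal A$ is compact in $\Cal H_{loc}^{w^*}$, a subsequence converges weakly-star to some $a\in\Cal A$; since it is bounded in $W^{1,p}(B^R_{x_0})$ and the embedding $W^{1,p}(B^R_{x_0})\hookrightarrow\hookrightarrow W^{1-\eb,p}(B^R_{x_0})$ is compact, a further subsequence converges strongly in $W^{1-\eb,p}(B^R_{x_0})$. The strong limit must coincide with the weak-star limit $a$, so $a_n\to a$ strongly in $W^{1-\eb,p}(B^R_{x_0})$ along the subsequence. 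A diagonal extraction over a countable exhaustion of $\R^2$ by balls, together with the metrizability of $W^{1-\eb,p}_{loc}$, then shows that $\Cal A$ is compact in $[W^{1-\eb,p}_{loc}(\R^2)]^2$.

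The attraction property \eqref{5.ws} I would establish by contradiction. If it failed, there would exist $\delta>0$, $t_n\to\infty$ and $u_{0,n}\in B$ with $\operatorname{dist}_{W^{1-\eb,p}(B^R_{x_0})}\((S(t_n)u_{0,n})|_{B^R_{x_0}},\Cal A|_{B^R_{x_0}}\)\ge\delta$. For $n$ large, $S(t_n)u_{0,n}\in\Cal B_R$ by the absorbing property, hence is bounded in $W^{1,p}(B^R_{x_0})$. Passing to a subsequence, $S(t_n)u_{0,n}$ converges weakly-star in $\Cal H_{loc}^{w^*}$, and by the weak attraction in Theorem \ref{Th5.attr} its limit $u^*$ lies in $\Cal A$. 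By the same compact embedding a further subsequence converges to $u^*$ strongly in $W^{1-\eb,p}(B^R_{x_0})$, so the distance tends to zero, contradicting the lower bound $\delta$.

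This route is essentially soft: the only genuine input is the interior $W^{1,p}$-bound from Lemma \ref{Lem4.Yud}, and the remainder is the compact Sobolev embedding together with the identification of the strong and weak-star limits. I expect the only truly delicate point to be the endpoint $\eb=0$, i.e. strong convergence in $W^{1,p}_{loc}$ itself (as asserted in Theorem \ref{Th0.main}): there the compact embedding is no longer available and one must instead invoke the weighted enstrophy equality \eqref{4.enst} to prove strong $L^2_{loc}$-convergence of the vorticity $\omega$ by the energy method (controlling $\|\omega\|^2_{L^2_{\phi_{\eb,x_0}}}$ along the orbit and passing to the limit), and then recover strong $W^{1,p}_{loc}$-convergence of $u$ from the Biot--Savart relation $\rott u=\omega$, $\divv u=0$ and the uniform $L^\infty$-bound on $\omega$. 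For the corollary as stated, however, the compactness-based argument above is sufficient and sidesteps this difficulty.
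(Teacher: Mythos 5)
Your argument is correct and is essentially the paper's own proof: the paper disposes of this corollary in one sentence by invoking the compactness of the embedding $\Cal H_{loc}\subset W^{1-\eb,p}_{loc}(\R^2)$, which is exactly the combination of the Yudovich bound of Lemma \ref{Lem4.Yud} with the compact Sobolev embedding that you spell out. Your closing remark about the endpoint $\eb=0$ also correctly anticipates the paper's Theorem \ref{Prop5.strong}, where the weighted enstrophy equality and the energy method are indeed required.
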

Indeed, the convergence \eqref{5.ws} is an immediate corollary of the definition  of the attractor $\Cal A$ and the compactness of the embedding $\Cal H_{loc}\subset W^{1-\eb,p}_{loc}(\R^2)$.
\par
We conclude this section by establishing, analogously to \cite{CVZ11}, that we may take $\eb=0$ in \eqref{5.ws}.

\begin{theorem}\label{Prop5.strong} Let the above assumptions hold. Then the attractor $\Cal A$ of the solution semigroup associated with the damped Euler equation \eqref{1} is compact in $W^{1,p}_{loc}(\R^2)$ for any $p<\infty$ and attracts the images of bounded sets in $\Cal H_b$ in the topology of this space.
\end{theorem}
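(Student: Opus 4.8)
The plan is to upgrade the $W^{1-\eb,p}_{loc}$ statement of Corollary~\ref{Cor5.ws} to the endpoint $\eb=0$ by means of the \emph{energy (enstrophy) method}, built on the weighted enstrophy equality \eqref{4.enst}. The guiding principle is that once the vorticities converge \emph{strongly} in $L^2_{loc}$, the elliptic regularity estimate \eqref{4.Jureg} applied to differences (recall $\divv u_n=0$, $\rott u_n=\omega_n$) promotes this to strong $W^{1,p}_{loc}$ convergence of the velocities for every $p<\infty$, the lower-order $L^2_{loc}$ term being controlled by the convergence already furnished by Corollary~\ref{Cor5.ws}. Thus everything reduces to strong $L^2_{loc}$ convergence of vorticity; by interpolation with the uniform $L^\infty$ bound \eqref{9} this further reduces to convergence of the weighted enstrophies $\|\omega_n(\cdot)\|^2_{L^2_{\phi_{\eb,x_0}}}$ for a single, fixed, sufficiently small $\eb>0$.

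First I would set up the sequences. For compactness of $\Cal A$, take $u^{(n)}\in\Cal A$ and, using $\Cal A=\Cal K\big|_{t=0}$, extend them to complete bounded trajectories $u_n(\cdot)\in\Cal K$; for attraction, take $t_n\to\infty$, $u_{0,n}$ in a bounded set of $\Cal H_b$, and consider the shifted trajectories $u_n(s):=S(t_n+s)u_{0,n}$, which by \eqref{5.dis} are uniformly bounded in $\Cal H_b$ on $[-t_n+1,\infty)$. In both cases a diagonal and weak-compactness argument (metrizability and compactness of balls in $\Cal H^{w^*}_{loc}$, together with the weak continuity of $S(t)$ used in the proof of Theorem~\ref{Th5.attr}) yields, along a subsequence, a complete bounded trajectory $u(\cdot)\in\Cal K$ with $u_n(\tau)\rightharpoonup u(\tau)$ weak-$*$ in $\Cal H_{loc}$ for every $\tau$, hence $u_n(\tau)\to u(\tau)$ strongly in $L^2_{loc}$ by Corollary~\ref{Cor5.ws} and $\omega_n(\tau)\rightharpoonup\omega(\tau)$ weak-$*$ in $L^\infty$. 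Each $u_n$ and the limit $u$ satisfy the enstrophy equality \eqref{4.enst}; integrating it with the factor $e^{2\alpha\tau}$ and sending the left endpoint to $-\infty$ (the boundary term $\|\omega_n(-T)\|^2_{L^2_{\phi_{\eb,x_0}}}e^{-2\alpha T}\le Ce^{-2\alpha T}\to0$ by uniform boundedness) represents $\|\omega_n(0)\|^2_{L^2_{\phi_{\eb,x_0}}}$ as $\int_{-\infty}^0 e^{2\alpha\tau}\big[2(u_n\cdot\Nx\phi_{\eb,x_0},|\omega_n|^2)+2(\rott g,\omega_n\phi_{\eb,x_0})\big]\,d\tau$, and likewise for $u$.

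The main obstacle is the nonlinear flux term $(u_n\cdot\Nx\phi_{\eb,x_0},|\omega_n|^2)$, in which $\omega_n$ enters quadratically while converging only weakly: passing to the limit (using strong $L^2_{loc}$ convergence of $u_n$) produces $\int \Nx\phi_{\eb,x_0}\cdot u\,\nu_\tau$, where $\nu_\tau$ is the weak-$*$ limit of $|\omega_n(\tau)|^2$ and satisfies $\nu_\tau\ge|\omega(\tau)|^2$ by weak lower semicontinuity, with a nonnegative \emph{defect} $d(\tau):=\int\phi_{\eb,x_0}(\nu_\tau-|\omega(\tau)|^2)\,dx\ge0$. The resolution is to exploit that the exponential weight obeys $|\Nx\phi_{\eb,x_0}|\le\eb\,\phi_{\eb,x_0}$: subtracting the two integral representations gives $d(0)\le 2\eb M\int_{-\infty}^0 e^{2\alpha\tau}d(\tau)\,d\tau\le\tfrac{\eb M}{\alpha}\sup_\tau d(\tau)$, where $M$ bounds $\|u\|_{L^\infty}$ on the attractor; since the same estimate holds after any time shift, taking the supremum yields $\sup_\tau d\le\tfrac{\eb M}{\alpha}\sup_\tau d$, so that $d\equiv0$ once $\eb<\alpha/M$ is fixed. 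Consequently $\|\omega_n(0)\|^2_{L^2_{\phi_{\eb,x_0}}}\to\|\omega(0)\|^2_{L^2_{\phi_{\eb,x_0}}}$, which together with the weak convergence in the Hilbert space $L^2_{\phi_{\eb,x_0}}$ gives strong convergence $\omega_n(0)\to\omega(0)$ there, hence in every $L^2(B^R_{x_0})$. Interpolating with the $L^\infty$ bound and invoking \eqref{4.Jureg} as above produces strong $W^{1,p}_{loc}$ convergence with limit $u(0)\in\Cal A$, which simultaneously establishes compactness of $\Cal A$ in $W^{1,p}_{loc}$ and, by the standard contradiction argument, the strong $W^{1,p}_{loc}$ attraction of bounded sets.
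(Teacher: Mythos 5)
Your proposal is correct and follows the paper's own strategy almost step for step: reduce to strong $L^2_{loc}$ compactness of the vorticities, represent $\|\omega_n(0)\|^2_{L^2_{\phi_{\eb,x_0}}}$ via the weighted enstrophy equality \eqref{4.enst} integrated over $(-\infty,0]$, and exploit the smallness of $\eb$ relative to $\alpha/\|u\|_{L^\infty}$ to control the quadratic flux term $(u_n\cdot\Nx\phi_{\eb,x_0},|\omega_n|^2)$. The one genuine point of divergence is how that last term is handled. The paper keeps the damping inside the time integral (see \eqref{5.hugen}) so that the combined integrand $e^{\alpha s}(\alpha-\phi_{\eb,x_0}^{-1}u\cdot\Nx\phi_{\eb,x_0})\phi_{\eb,x_0}|\omega|^2$ is, for small $\eb$, a \emph{nonnegative convex} functional of $\omega$; weak lower semicontinuity (citing \cite{Io}) then gives $\limsup_n\|\omega_n(0)\|^2\le\|\omega(0)\|^2$ in a single step, with no need to identify the weak limit of $|\omega_n|^2$. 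You instead introduce the defect $d(\tau)=\int\phi_{\eb,x_0}(\nu_\tau-|\omega(\tau)|^2)\,dx\ge0$ and close via the absorption inequality $\sup_\tau d\le(\eb M/\alpha)\sup_\tau d$. This works, but it carries some extra overhead you should acknowledge: you must extract the weak-$*$ limits $\nu_\tau$ simultaneously for a.e.\ $\tau$ (a diagonal/measurability argument), and you need $\sup_\tau d(\tau)<\infty$ before the absorption inequality yields $d\equiv0$ (it does, from the uniform $L^\infty\cap L^2_b$ bounds on $\omega_n$ and the integrability of $\phi_{\eb,x_0}$, but this must be said). The paper's convexity route buys you exactly the avoidance of these two points; your route buys a slightly more quantitative picture (an explicit decay of the defect) but nothing more is extracted from it here. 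Two small citation slips worth fixing: strong $L^2_{loc}$ (indeed $C_{loc}$) convergence of $u_n$ comes from the uniform bounds on $u_n$ and $\Dt u_n$ plus compact embeddings, not from Corollary \ref{Cor5.ws}; and the gradient bound you want is \eqref{5.exp}, $|\Nx\phi_{\eb,x_0}|\le C\eb\,\phi_{\eb,x_0}$, which only changes your threshold for $\eb$ by the constant $C$.
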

\begin{proof} Indeed, due to the interpolation, it is sufficient to verify the asymptotic compactness of $S(t)$ in $W^{1,2}_{loc}(\R^2)$ or, which is the same, the asymptotic compactness of the associated vorticity $\omega$ in $L^2_{loc}(\R^2)$. To verify it, following \cite{CVZ11}, we will use the so-called energy method, see also \cite{B,MRW}. Let $u_0^n\in\Cal B_R$ be a sequence of the initial data and $t_n\to\infty$ be a sequence of times. We need to verify that the sequence $S(t_n)u_0^n$ is precompact in $W^{1,2}_{loc}(\R^2)$.
\par
Let $u_n(t)$, $t\ge -t_n$, be the solutions of the following damped Euler problems:
\begin{equation}\label{5.eun}
\Dt u_n+(u_n,\Nx)u_n+\Nx p_n+\alpha u_n=g,\ \ u_n\big|_{t=-t_n}=u_0^n.
\end{equation}
and the associated vorticities $\omega_n=\rott u_n$ solve
\begin{equation}\label{5.vortn}
\Dt \omega_n+(u_n,\Nx)\omega_n+\alpha \omega_n=\rott g,\ \ \omega_n\big|_{t=0}=\rott u_0^n.
\end{equation}
To verify the desired asymptotic compactness (and to finish the proof of the theorem), we only need to verify that the sequence $\omega_n(0)$ is precompact in $L^2_{loc}(\R^2)$. We first note that, due to the dissipative estimate \eqref{8}, the sequence $u_n$ is uniformly bounded in $\Cal H_b$:
\begin{equation}\label{5.bound}
\|u_n\|_{L^\infty(\mathbb R,\Cal H_b)}+\|\Dt u_n\|_{L^\infty(\R,L^q_b(\R^2))},\ \ q<\infty,
\end{equation}
where the control over the norm of $\Dt u_n$ is obtained from equation \eqref{5.eun} analogously to Definition \ref{Def0.1} (to simplify the notations, we extend $u_n$ and $\Dt u_n$ by zero for $t\le t_n$). Thus, without loss of generality, we may assume that
\begin{equation}\label{5.w-conv}
u_n\to u\ \ \text{weakly star in }\ \ L^\infty_{loc}(\R,\Cal H_{loc}),\ \ \Dt u_n\to\Dt u\ \ \text{weakly star in}\ L^\infty_{loc}(\R,L^q_{loc}(\R^2)).
\end{equation}
Then, due to the compactness arguments,
\begin{equation}\label{5.s-conv}
u_n\to u \ \ \text{ strongly in } C_{loc}(\R\times\R^3)
\end{equation}
and, in particular,
\begin{equation}\label{5.s-vort}
\omega_n\to\omega \ \ \text{ weakly star in } L^\infty_{loc}(\R\times\R^2)\ \ \text{ and}\ \ \omega_n\to\omega \ \ \text{strongly in }\
C_{loc}(\R,W^{-1,2}_{loc}(\R^2))
\end{equation}
Passing to the limit $n\to\infty$ in a straightforward way in equations \eqref{5.eun}, we see that the limit function $u(t)$, $t\in\R$, solves the damped Euler equation \eqref{1} and, therefore, $u\in\Cal K$. Moreover, from \eqref{5.s-vort}, we conclude that
\begin{equation}\label{5.0s-weak}
\omega_n(0)\to\omega(0)\ \ \text{weakly in }\ L^2_{loc}(\R^2)
\end{equation}
and using that $\omega_n(0)$ is uniformly bounded in $L^2_b(\R^2)$ the last convergence implies that
\begin{equation}\label{5.0w-weak}
\omega_n(0)\to\omega(0)\ \ \text{weakly in }\ L^2_{\phi_{\eb,x_0}}(\R^2)
\end{equation}
for all $\eb>0$ and $x_0\in\R^2$.
\par
At the second step, we will show that the convergence in \eqref{5.0w-weak} is actually {\it strong} which will complete the proof of the theorem. To this end, it is enough to prove that
\begin{equation}\label{5.norm-conv}
\|\omega_n(0)\|_{L^2_{\phi_{\eb,x_0}}}\to\|\omega(0)\|_{L^2_{\phi_{\eb,x_0}}}
\end{equation}
for some $\eb>0$ and $x_0\in\R^2$. To this end, we will use the enstrophy equality \eqref{4.enst} for equations \eqref{5.vortn} which we rewrite in the following form:
\begin{multline}\label{5.hugen}
\|\omega_n(0)\|^2_{L^2_{\phi_{\eb,x_0}}}+\\+\int_{-t_n}^0e^{\alpha s}\int_{x\in\R^2}(\alpha-\phi_{\eb,x_0}(x)^{-1}u_n(x,s).\Nx\phi_{\eb,x_0}(x))\phi_{\eb,x_0}(x)|\omega_n(s,x)|^2\,dx\,ds=\\=
\|\omega_n(-t_n)\|^2_{L^2_{\phi_{\eb,x_0}}}e^{-\alpha t_n}+\int_{-t_n}^0e^{\alpha s}(\rott g,\omega_n(s)\phi_{\eb,x_0})\,ds.
\end{multline}
Remind that
\begin{equation}\label{5.exp}
|\Nx\phi_{\eb,x_0}(x)|\le C\eb\phi_{\eb,x_0}(x)
\end{equation}
and therefore we may fix $\eb>0$ being small enough that
\begin{equation}
\alpha-\phi_{\eb,x_0}(x)^{-1}u_n(x,s).\Nx\phi_{\eb,x_0}(x)\ge0
\end{equation}
for all $(s,x)\in\R_-\times\R^2$. Then, using the classical result on the weak lower semicontinuity of convex functionals, see e.g., \cite{Io}, together with the strong convergence \eqref{5.s-conv} and weak convergence \eqref{5.s-vort}, we conclude that
\begin{equation*}
\int_{s\in\R_-}\int_{x\in\R^2}F(s,x,u(s,x),\omega(s,x))\,ds\,dx\le
\liminf_{n\to\infty}\int_{-t_n}^0\int_{x\in\R^2}F(s,x,u_n(s,x),\omega_n(s,x))\,ds\,dx,
\end{equation*}
where
\begin{equation}
F(s,x,u,\omega):=e^{\alpha s}(\alpha-\phi_{\eb,x_0}(x)^{-1}u.\Nx\phi_{\eb,x_0}(x))\phi_{\eb,x_0}(x)|\omega|^2.
\end{equation}
Passing now to the limit $n\to\infty$ in \eqref{5.hugen}, we arrive at
\begin{multline}\label{5.hugelim}
\limsup_{n\to\infty}\|\omega_n(0)\|^2_{L^2_{\phi_{\eb,x_0}}}+\int_{-\infty}^0\int_{x\in\R^2}F(s,x,u(s,x),\omega(s,x))dx\,ds\le\\\le
\int_{-\infty}^0e^{\alpha s}(\rott g,\omega(s)\phi_{\eb,x_0})\,ds.
\end{multline}
On the other hand, according to the enstrophy equality for the limit functions $u$ and $\omega$,
\begin{equation}\label{5.hugeattr}
\|\omega(0)\|^2_{L^2_{\phi_{\eb,x_0}}}+\int_{-\infty}^0\int_{x\in\R^2}F(s,x,u(s,x),\omega(s,x))dx\,ds=
\int_{-\infty}^0e^{\alpha s}(\rott g,\omega(s)\phi_{\eb,x_0})\,ds.
\end{equation}
Thus,
\begin{equation}
\limsup_{n\to\infty}\|\omega_n(0)\|^2_{L^2_{\phi_{\eb,x_0}}}\le
\|\omega(0)\|^2_{L^2_{\phi_{\eb,x_0}}}\le\liminf_{n\to\infty}\|\omega_n(0)\|^2_{L^2_{\phi_{\eb,x_0}}}.
\end{equation}
Therefore, the convergence \eqref{5.norm-conv} is verified and the theorem is proved.
\end{proof}

\section{Appendix. The interpolation inequality}\label{sA}
The aim of this Appendix is to verify the following interpolation inequality.
\begin{lemma} Let  $u\in L^2_b(\R^2)$ be the divergent free vector field  such that $\omega:= \rott u\in L^\infty(\R^2)$. Then, the following inequality holds:
\begin{equation}\label{A.1}
\|u\|_{L^3(B^{R}_{x_0})}^3\le C\(\frac1R\|u\|^3_{L^2(B^{2R}_{x_0})}+\|u\|^{5/2}_{L^2(B^{2R}_{x_0})}\|\omega\|_{L^\infty(B^{2R}_{x_0})}^{1/2}\).
\end{equation}
where $R>0$, $x_0\in\R^2$ are arbitrary and the constant $C$ is independent of $R$, $x_0$, and $u$.
\end{lemma}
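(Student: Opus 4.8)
The plan is to reduce \eqref{A.1} to the pointwise $L^\infty$-bound \eqref{inf-ineq} and to prove the latter by a Biot--Savart representation combined with a near/far splitting at an optimized length scale. First I would remove the parameter $R$ by scaling: the substitution $v(y):=u(Ry)$ sends the $L^2$- and $L^3$-norms over $B^{\kappa R}_{x_0}$ to those over $B^\kappa_{x_0/R}$ and sends $\rott u$ to $R^{-1}\rott v$, under which \eqref{A.1} is exactly scale invariant. Hence it suffices to treat $R=1$, and after a translation we may center everything at the origin. Using the elementary Hölder bound
\[
\|u\|_{L^3(B^1_0)}^3\le\|u\|_{L^\infty(B^1_0)}\,\|u\|_{L^2(B^1_0)}^2\le\|u\|_{L^\infty(B^1_0)}\,\|u\|_{L^2(B^2_0)}^2,
\]
the whole statement follows from the $L^\infty$-estimate
\[
\|u\|_{L^\infty(B^1_0)}\le C\(\|\omega\|_{L^\infty(B^2_0)}^{1/2}\|u\|_{L^2(B^2_0)}^{1/2}+\|u\|_{L^2(B^2_0)}\),
\]
which is precisely \eqref{inf-ineq}. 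Thus the entire content is this pointwise bound, and it is enough to estimate $|u(0)|$ using the data on the unit ball $B^1_0$ (for the supremum over $B^1_0$ one applies the bound at each $y_0\in B^1_0$ with data on $B^1_{y_0}\subset B^2_0$).

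To bound $u(0)$, I would fix a cut-off $\varphi$ equal to $1$ on a fixed inner ball and supported in $B^1_0$, set $\tilde\omega:=\varphi\omega$, and introduce the Biot--Savart field $w:=K*\tilde\omega$ with kernel $K(z)=\frac1{2\pi}|z|^{-2}(-z_2,z_1)$, so that $\divv w=0$ and $\rott w=\tilde\omega$. On the inner ball $\varphi\equiv1$, hence $\rott(u-w)=0$ and $\divv(u-w)=0$, so $h:=u-w$ is (componentwise) harmonic there; the mean-value property together with an interior $L^2$-estimate gives $|h(0)|\le C\|h\|_{L^2}\le C\(\|u\|_{L^2(B^2_0)}+\|w\|_{L^2(B^2_0)}\)$. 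It then remains to estimate $w$, and $\|w\|_{L^2}$ will be controlled by the same pointwise bound applied at nearby centers, so everything reduces to bounding $w(0)$.

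The crux is to estimate $w(0)=\int K(-y)\varphi(y)\,\rott u(y)\,dy$ with the correct interpolated dependence. I would split with a smooth radial cut-off $\chi_\delta$ ($\chi_\delta\equiv1$ on $|y|\le\delta$, supported in $|y|\le2\delta$, $|\Nx\chi_\delta|\le C\delta^{-1}$). The near part $\int K(-y)\varphi\chi_\delta\,\rott u$ is bounded using $|K(z)|\le C|z|^{-1}$ and $\rott u=\omega$ directly by $C\delta\|\omega\|_{L^\infty(B^2_0)}$. In the far part the integrand $K(-\cdot)\varphi(1-\chi_\delta)$ is smooth with compact support in $B^1_0\setminus\{0\}$, so integrating by parts to move the derivative from $\rott u$ onto the kernel produces \emph{no} boundary terms and replaces $\omega$ by $u$; the main term uses $|\Nx K(z)|\le C|z|^{-2}$ and Cauchy--Schwarz,
\[
\int_{\delta\le|y|\le2}\frac{|u(y)|}{|y|^2}\,dy\le C\(\int_{\delta\le|y|\le2}\frac{dy}{|y|^4}\)^{1/2}\|u\|_{L^2(B^2_0)}\le C\delta^{-1}\|u\|_{L^2(B^2_0)},
\]
while the terms where the derivative falls on $\chi_\delta$ (supported in $\delta\le|y|\le2\delta$) or on $\varphi$ (supported where $|y|\sim1$) are likewise $\le C\delta^{-1}\|u\|_{L^2(B^2_0)}$ and $\le C\|u\|_{L^2(B^2_0)}$ respectively. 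Collecting these,
\[
|u(0)|\le C\delta\,\|\omega\|_{L^\infty(B^2_0)}+C\delta^{-1}\|u\|_{L^2(B^2_0)}+C\|u\|_{L^2(B^2_0)},
\]
and optimizing in $\delta$ (choosing $\delta=(\|u\|_{L^2(B^2_0)}/\|\omega\|_{L^\infty(B^2_0)})^{1/2}$, capped by a fixed constant so that $\delta$ stays below the support radius) yields \eqref{inf-ineq}, whence \eqref{A.1}.

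I expect the main obstacle to be precisely the borderline nature of the $L^\infty$-bound in two dimensions: the naive route, estimating $|u(0)-u(y)|$ by the Yudovich log-Lipschitz modulus of continuity (or by $\|\nabla u\|_{L^2}$ via a potential estimate), produces an extra logarithmic factor and only gives \eqref{inf-ineq} up to $\log$. The device that removes the logarithm is the integration by parts in the far field, which trades the crude factor $\|\omega\|$ for $\|u\|_{L^2}$ and restores the clean interpolation exponents. The only point needing justification is the integration by parts itself, which is legitimate since $u\in W^{1,p}_{loc}(\R^2)$ for all $p<\infty$ by \eqref{7} (equivalently \eqref{2.sm}); alternatively one first proves the inequality for smooth $u$ and passes to the limit by density, the constants being independent of the approximation.
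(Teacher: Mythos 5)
Your proof is correct, but it goes by a genuinely different route than the paper. The paper scales to $R=1$ and then works entirely with the two interpolation inequalities \eqref{4.keyest} and \eqref{4.keyest1} quoted from \cite{Zel-JMFM} (Proposition \ref{Lem4.key}): it applies \eqref{4.keyest} to $\varphi u$ to produce a term $\|u\|_{L^2}^{5/2}\|u\|_{L^\infty}^{1/2}$, controls that $L^\infty$-norm via \eqref{4.keyest1} with $p=4$, disposes of the resulting $L^4$-norm by a Ladyzhenskaya-type inequality \eqref{A.5}, and closes with Young's inequality. You instead prove the stronger pointwise bound \eqref{inf-ineq} directly — Biot--Savart representation of the localized vorticity, a harmonic remainder handled by the mean-value property, and a near/far splitting of the kernel at an optimized scale $\delta\sim(\|u\|_{L^2}/\|\omega\|_{L^\infty})^{1/2}$, with the integration by parts in the far field trading $\omega$ for $u$ and thereby avoiding the logarithmic loss — and then deduce \eqref{A.1} by H\"older and scaling (your scaling check and the exponents $5/2$ and $1/2$ all come out right). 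What your route buys is that it is self-contained and actually establishes \eqref{inf-ineq}, which the paper only asserts in Remark \ref{Rem3.NS0} ``can be proved analogously'' and which is needed there for the improved growth estimates \eqref{R-better} and \eqref{R_0-better}; what the paper's route buys is that it reuses machinery already proved in \cite{Zel-JMFM} and never needs the potential-theoretic representation. Two small points worth tightening: the remark that $\|w\|_{L^2}$ is ``controlled by the same pointwise bound applied at nearby centers'' sounds circular but unwinds correctly, since the splitting bounds $w$ (not $u$) pointwise at every center of the inner ball independently of $h$, so it would be cleaner to state the pointwise bound for $w$ on the inner ball first and only then estimate $h=u-w$; and pointwise evaluation of $u$ should be justified either by \eqref{7} as you indicate or a posteriori from the continuity of $w=K*\tilde\omega$ and of the harmonic remainder.
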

To verify this inequality, we use the following result proved in \cite{Zel-JMFM}.
\begin{proposition}\label{Lem4.key}
  Let the vector field $u\in [W^{1,2}_0(B^{2R}_{x_0})]^2$ be such that $\divv u,\rot u\in L^\infty(B^{2R}_{x_0})$. Then,
\begin{equation}\label{4.keyest}
\|u\|_{L^3(B^{2R}_{x_0})}\le C\|u\|_{L^2(B^{2R}_{x_0})}^{5/6}\(\|\rot u\|_{L^\infty(B^{2R}_{x_0})}+\|\divv u\|_{L^\infty(B^{2R}_{x_0})}\)^{1/6},
\end{equation}
where the constant $C$ is independent of $R$ and $x_0$. Moreover, for any $2<p<\infty$,
\begin{equation}\label{4.keyest1}
\|u\|_{L^\infty(B^{2R}_{x_0})}\le C\|u\|_{L^2(B^{2R}_{x_0})}^{\theta}\(\|\rot u\|_{L^p(B^{2R}_{x_0})}+\|\divv u\|_{L^p(B^{2R}_{x_0})}\)^{1-\theta},
\end{equation}
where $ \theta=\frac12-\frac1{2(p-1)}$, $C$ may depend on $p$, but is independent of $R$ and $x_0\in\R^2$.
\end{proposition}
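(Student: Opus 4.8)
The plan is to prove both inequalities by first reducing to a fixed ball via scaling and then exploiting the Newtonian potential representation of $u$ in terms of $\divv u$ and $\rott u$. Both estimates \eqref{4.keyest} and \eqref{4.keyest1} are invariant under the rescaling $u(x)\mapsto u(x_0+2Rx)$, which maps $W^{1,2}_0(B^{2R}_{x_0})$ onto $W^{1,2}_0(B^1_0)$ and rescales the three norms involved with exactly matching powers; hence it suffices to work on a ball of unit size, and the constants produced below are automatically independent of $R$ and $x_0$. Extending $u$ by zero to $\R^2$ (legitimate since $u\in W^{1,2}_0$), the fields $d:=\divv u$ and $\omega:=\rott u$ are bounded and supported in the ball, and the elementary identities $\Dx u_1=\partial_{x_1}d+\partial_{x_2}\omega$ and $\Dx u_2=\partial_{x_2}d-\partial_{x_1}\omega$ give a representation $u=K*(d,\omega)$, where $K=\Nx N$ (with $N$ the $2$D fundamental solution of the Laplacian) satisfies $|K(z)|\le C|z|^{-1}$ and $|\Nx K(z)|\le C|z|^{-2}$.

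For the $L^\infty$-based inequality \eqref{4.keyest} I would first establish the pointwise bound $\|u\|_{L^\infty}\le C\|u\|_{L^2}^{1/2}M^{1/2}$ on the unit ball, where $M:=\|d\|_{L^\infty}+\|\omega\|_{L^\infty}$. Fixing a point and a smooth cut-off $\chi_\delta$ equal to $1$ on $\{|z|<\delta\}$, vanishing on $\{|z|>2\delta\}$ and with $|\Nx\chi_\delta|\le C\delta^{-1}$, I split $u=K*(\chi_\delta(d,\omega))+K*((1-\chi_\delta)(d,\omega))$. The near part is controlled by the local integrability of $|z|^{-1}$, giving $\le CM\int_{|z|<2\delta}|z|^{-1}\,dz\le CM\delta$. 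In the far part I integrate by parts, moving the derivative that defines $d,\omega$ off $u$ and onto $(1-\chi_\delta)K$: this trades the borderline kernel $|z|^{-1}$ for $\Nx K\sim|z|^{-2}$ acting on $u$ itself, which pairs with $u\in L^2$ to yield $\le C\delta^{-1}\|u\|_{L^2}$, together with a commutator term $\Nx\chi_\delta\cdot K$ supported in $\{\delta<|z|<2\delta\}$ which is $\le C\delta^{-2}\int_{\{\delta<|z|<2\delta\}}|u|\le C\delta^{-1}\|u\|_{L^2}$. Thus $|u(x)|\le C(M\delta+\delta^{-1}\|u\|_{L^2})$, and optimizing in $\delta$ (namely $\delta=(\|u\|_{L^2}/M)^{1/2}$) gives the claimed $L^\infty$ bound. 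Then \eqref{4.keyest} follows immediately from $\|u\|_{L^3}^3\le\|u\|_{L^\infty}\|u\|_{L^2}^2\le C\|u\|_{L^2}^{5/2}M^{1/2}$, and unscaling restores the $B^{2R}_{x_0}$ form.

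For the second inequality \eqref{4.keyest1}, where only the $L^p$-norms of $d,\omega$ are available with $p>2$, the endpoint argument above is unavailable, so I combine Calder\'on--Zygmund theory with Gagliardo--Nirenberg. Differentiating the representation gives $\Nx u=(\Nx\otimes\Nx N)*(d,\omega)$, a matrix of homogeneous degree $-2$ singular integrals of zero mean, hence bounded on $L^p(\R^2)$ for $1<p<\infty$; this yields $\|\Nx u\|_{L^p}\le C_p(\|d\|_{L^p}+\|\omega\|_{L^p})$. The scale-invariant $2$D Gagliardo--Nirenberg inequality $\|u\|_{L^\infty}\le C\|u\|_{L^2}^{\theta}\|\Nx u\|_{L^p}^{1-\theta}$ holds exactly with $\theta=\frac12-\frac1{2(p-1)}$, the exponent being forced by scaling; inserting the previous bound gives \eqref{4.keyest1}, and unscaling completes the proof.

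The main obstacle is the far-field estimate in the $L^\infty$ bound underlying \eqref{4.keyest}. A na\"ive pairing of the kernel $|z|^{-1}$ with $d,\omega\in L^2$ (or with $u\in L^2$ without integrating by parts) produces a logarithmic divergence in $2$D, which would corrupt the sharp exponent $5/6$ and leave only the weaker power coming from ordinary Gagliardo--Nirenberg through $\|\Nx u\|_{L^2}\le CM$. The integration-by-parts step with the smooth cut-off is precisely what removes this logarithm: it relocates one derivative onto the kernel, improving its decay to $|z|^{-2}$ so that the $L^2$-pairing converges, while the resulting commutator stays of the same order. The delicate bookkeeping is to borrow $L^\infty$-control of $d,\omega$ only in the near zone and $L^2$-control of $u$ in the far zone, which is exactly what lets the refined exponent beat what is obtainable from $\|\Nx u\|_{L^2}$ alone.
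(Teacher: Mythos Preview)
The paper does not give its own proof of this proposition; it is quoted as a result ``proved in \cite{Zel-JMFM}'' and used as a black box in the derivation of the interpolation inequality \eqref{A.1}. So there is nothing in the present paper to compare your argument against.

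That said, your proof is correct and self-contained. The scaling reduction to the unit ball is valid (both inequalities are scale-invariant with the stated exponents), and for $u\in W^{1,2}_0$ the zero extension carries no boundary layer, so $d,\omega$ are genuinely the compactly supported $L^\infty$ (resp.\ $L^p$) functions and the identity $\Delta u_j=\partial_j d\pm\partial_{3-j}\omega$ together with Liouville justifies $u=\nabla N*(d,\omega)$. In the near/far argument the key step---moving the derivative defining $d,\omega$ back onto the smooth kernel $(1-\chi_\delta)\nabla N$---is exactly what upgrades the decay from $|z|^{-1}$ to $|z|^{-2}$ and makes the Cauchy--Schwarz pairing with $u\in L^2$ converge; the commutator with $\nabla\chi_\delta$ is of the same order, as you note. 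Optimizing $\delta$ yields $\|u\|_{L^\infty}\le CM^{1/2}\|u\|_{L^2}^{1/2}$ (the degenerate case $M=0$ forces $u$ harmonic with compact support, hence $u=0$), and \eqref{4.keyest} follows via $\|u\|_{L^3}^3\le\|u\|_{L^\infty}\|u\|_{L^2}^2$. For \eqref{4.keyest1}, Calder\'on--Zygmund applied to $\nabla u=(\nabla^2 N)*(d,\omega)$ and the scale-invariant Gagliardo--Nirenberg inequality in $\R^2$ (valid at the $L^\infty$ endpoint precisely because $p>2$) give the result with the stated $\theta$.
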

\begin{proof}[Proof of the lemma]
We first note that \thetag{A.1} is homogeneous, so (scaling $x\to Rx$ if necessary) it is enough to prove it for $R=1$ and $x_0=0$ only. Let now $\varphi\in C^\infty_0(B^{4/3}_0)$ be the cut-off function such that $\varphi(x)\equiv 1$ if $x\in B^1_0$. Then applying inequality \eqref{4.keyest} with $R=2$  to the vector field $\varphi u$, we get
\begin{equation}\label{A.2}
\|u\|_{L^3(B^1_0)}^3 \le \|\varphi u\|_{L^3(B^{4/3}_0)}^3\le C\|u\|_{L^2(B^2_0)}^{5/2}\|\omega\|_{L^\infty(B^2_0)}^{1/2}+C\|u\|_{L^2(B^2_0)}^{5/2}\| u\|_{L^\infty(B^{4/3}_0)}^{1/2}.
\end{equation}
Applying now inequality \eqref{4.keyest1} with $p=4$ and $\theta=1/3$ to the vector field $\varphi_1 u$, where $\varphi_1\in C_0^\infty(B^{5/3}_0)$
is a new cut-off function which equals to one if $x\in B^{4/3}_0$, we get
\begin{equation}\label{A.3}
\|u\|_{L^\infty(B^{4/3}_0)}^{1/2}\le C\|u\|_{L^2(B^2_0)}^{1/6}\|\omega\|_{L^4(B^{2}_0)}^{1/3}+ \|u\|_{L^2(B^2_0)}^{1/6}\|u\|^{1/3}_{L^4(B^{5/3}_0)}.
\end{equation}
In the first term, we replace the $L^4$-norm of $\omega$ by its $L^\infty$-norm, insert the obtained result to \eqref{A.2} and use the Young inequality, this gives
\begin{multline}\label{A.4}
\|u\|^3_{L^3(B^1_0)}\le C\|u\|_{L^2(B^2_0)}^{5/2}\|\omega\|_{L^\infty(B^2_0)}^{1/2}+
C\|u\|^{8/3}_{L^2(B^2_0)}\|\omega\|^{1/3}_{L^\infty(B^2_0)}+\\+C\|u\|^{8/3}_{L^2(B^2_0)}\|u\|^{1/3}_{L^4(B^{5/3}_0)}\le  C\|u\|_{L^2(B^2_0)}^3+ C\|u\|_{L^2(B^2_0)}^{5/2}\|\omega\|_{L^\infty(B^2_0)}^{1/2}+\\+ C\|u\|^{8/3}_{L^2(B^2_0)}\|u\|^{1/3}_{L^4(B^{5/3}_0)}.
\end{multline}
Thus, we only need to estimate the $L^4$-norm in the RHS of \eqref{A.4}. To this end, we introduce one more cut-off function $\varphi_2\in C_0^\infty(B^2_0)$ such that $\varphi_2(x)=1$ for $x\in B^{5/3}_0$ and use the following Ladyzhenskaya type inequality for vector fields $v\in W^{1,2}_0(B^2_0)$:
\begin{equation}\label{A.5}
\|v\|^4_{L^4(B^2_0)}\le C\|v\|_{L^2(B^2_0)}^2\(\|\divv v\|^2_{L^2(B^2_0)}+\|\rott v\|^2_{L^2(B^2_0)}\).
\end{equation}
Applying this inequality to the vector field $\varphi_2 u$ and estimating again the $L^2$-norm of the vorticity by its $L^\infty$-norm, we have
\begin{equation}\label{A.6}
\|u\|^{1/3}_{L^4(B^{5/3}_0)}\le C\|u\|^{1/6}_{L^2(B^2_0)}\|\omega\|^{1/6}_{L^\infty(B^2_0)}+C\|u\|_{L^2(B^2_0)}^{1/3}.
\end{equation}
Inserting this estimate to the RHS of \eqref{A.4} and using Young inequality again, we derive the desired estimate \eqref{A.1}.
\end{proof}

\end{document}